\numberwithin{equation}{section}
\newtheorem{theorem}{Theorem}[section]
\newtheorem{lemma}[theorem]{Lemma}
\newtheorem{proposition}[theorem]{Proposition}
\theoremstyle{definition}
\newtheorem{definition}[theorem]{Definition}
\theoremstyle{remark}
\newcommand{\Div}{\operatorname{div}}
\newcommand{\Grad}{\nabla}
\newcommand{\vu}{\vc{u}}
\newcommand{\vc}[1]{{\bm{#1}}}
\newcommand{\R}{\mathbb{R}}
\newcommand{\N}{\mathbb{N}}
\newcommand{\e}{\varepsilon}
\newcommand{\eps}{\epsilon}
\begin{document}

\title[On a Free Boundary Problem for a Model of Polymeric Fluids]{On  a free boundary problem for finitely extensible bead-spring chain molecules in dilute polymers }

\author[Donatelli]{Donatella Donatelli}
\address[Donatelli]{\newline
Departement of Engineering Computer Science and Mathematics\\
University of L'Aquila\\
67100 L'Aquila, Italy.}
\email[]{\href{donatella.donatelli@univaq.it}{donatella.donatelli@univaq.it}}
\urladdr{\href{http://people.disim.univaq.it/~donatell/}{http://people.disim.univaq.it/\~{}donatell}}

\author[Trivisa]{Konstantina Trivisa}
\address[Trivisa]{\newline
Department of Mathematics \\ University of Maryland \\ College Park, MD 20742-4015, USA.}
\email[]{\href{http://www.math.umd.edu}{trivisa@math.umd.edu}}
\urladdr{\href{http://www.math.umd.edu/~trivisa}{math.umd.edu/\~{}trivisa}}

\date{\today}

\subjclass[2010]{Primary: 35Q30, 76N10; Secondary: 46E35.}

\keywords{FENE model; suspensions of  extensible bead-spring chain molecules; dilute polymenrs; compressible Navier-Stokes equations; Fokker-Planck-type equation, free boundary problems.}

\thanks{}

\maketitle

\begin{abstract}
We investigate the global existence of weak solutions to a free boundary problem governing the evolution of 
 finitely extensible bead-spring chains in dilute polymers. We construct weak solutions of  the two-phase  model by performing the asymptotic limit  as  the adiabatic exponent $\gamma$ goes to $\infty$  for  a macroscopic model which arises from the kinetic theory of dilute solutions of nonhomogeneous polymeric liquids. In this context the  polymeric molecules are idealized as bead-spring chains with finitely extensible nonlinear elastic  (FENE) type spring potentials. This class of models involves the unsteady, compressible, isentropic, isothermal Navier-Stokes system in a bounded domain $\Omega$ in $\R^d,$ $d=2, 3$ coupled with a  Fokker-Planck-Smoluchowski-type diffusion equation (cf.\ Barrett and S\"{u}li \cite{BS2011},\cite{BS2012A}, \cite{BS2016}). The convergence of these solutions, up to a subsequence, to the free-boundary problem is established using weak convergence methods, compactness arguments which rely on the monotonicity properties of certain quantities  in the spirit of \cite{DonatelliTrivisa2017}.
\end{abstract}


\section{Introduction}\label{S1}
Systems modeling the interaction of fluids and polymeric molecules  are of great scientific  interest in many branches of applied physics, chemistry, biology and engineering. They are of  use in many industrial and medical applications such as food processing and blood flows. Polymeric molecules are very complex objects, and their description and  investigation present many challenges.  One of the most interesting  models is the FENE (Finite Extensible Nonlinear Elastic) dumbbell model. In this model, a polymer is idealized as an �elastic dumbbell� consisting of �beads� joined by a spring. We refer the reader to Bird, Amstrong and Hassager \cite{BAH1977, BAH1987}, Doi and Edwards \cite{DoiEdwards1986} for some physical introduction to the model, \"{O}ttinger \cite{Ottinger1996} for a more mathematical treatment following the stochastic framework and Owens and Phillips \cite{OwensPhillips2002} for the computational aspects of the problem. 

In order to gain some perspective of the complexity of the problem let us recall that one of the starting points in the investigation of polymeric flows  is due to Kirkwood and Riseman, who treated the perturbation of the velocity field due to the polymer's presence  by steady state hydrodynamics ignoring the dynamical motion of the polymer. Subsequently, Bird, Curtis, Armstrong and Hassager in  \cite{BCAH1987} advanced significantly Kirkwood's early theory introducing a general kinetic theoretical framework for both diluted and concentrated polymeric systems. In that context, the macromolecules are modeled as freely jointed bead-rod or bead-spring chains. 

The configurational distribution function, solution of an evolution (diffusion) equation of the Fokker-Planck Smoluchowski-type, is the foundation of polymer dynamics: it is central to the estimation of the components of the stress tensor.
The behavior of the viscoelastic flow in polymeric liquids is effected significantly by the complexity of  inter- and intramolecular interactions. At the microscopic level, long chain entanglements are a consequence of chain connectivity and backbone uncrossability due to intermolecular repulsive exclusive volume forces. Macromolecules diffusion (and conformational relaxation) is slowed down due to hydrodynamic drag and Brownian forces.

The microscopic effect due to the interaction between the macroscopic compressible fluid and the polymeric bead-like molecules produces an extra stress term in the momentum equation. This effect is known as micro-macro interaction. Analogously, there is an extra  drift term in the Fokker-Planck equation that depends on the spatial gradient of the velocity. This term represents  a macro-micro effect. The coupling satisfies the fact that the free-energy dissipates, which is important not only  from the physical point of view but also from mathematical considerations, since it allows us to obtain uniform bounds and hence prove global existence of weak solutions.

The  resulting system offers a detailed description of  the behavior of the complex mixture of polymer molecules and compressible fluid, and as such, it presents numerous challenges, simultaneously at the level of their derivation [23], at the level of their numerical simulation, at the level of their physical properties (rheology) and that of their mathematical treatment (see references below).

This paper establishes the existence of global-in-time weak solutions  to a free boundary problem governing the evolution of 
 finitely extensible bead-spring chains in dilute polymers. The free boundary problem is  defined with the aid of a threshold for the pressure beyond which one has the incompressible Navier-Stokes equations for the fluid and below which one has a compressible model for the gas. We construct weak solutions of  the two-phase  model by performing the asymptotic limit  as  the adiabatic exponent $\gamma$ goes to $\infty$  for  a macroscopic model which arises from the kinetic theory of dilute solutions of nonhomogeneous polymeric liquids. In this context the  polymeric molecules are idealized as bead-spring chains with finitely extensible nonlinear elastic  (FENE) type spring potentials. This class of models involves the unsteady, compressible, isentropic, isothermal Navier-Stokes system in a bounded domain $\Omega$ in $\R^d,$ $d=2,$ or $3$ coupled with a  Fokker-Planck-Smoluchowski-type diffusion equation (cf. Barrett and S\"{u}li \cite{BS2011},\cite{BS2012A}, \cite{BS2016}). The convergence of these solutions, up to a subsequence, to the free-boundary problem  is established using techniques in the spirit of Lions and Masmoudi \cite{LionsMasmoudi-1999}. 
 
 For related work in the context of polymeric fluids we refer the reader to \cite{DonatelliTrivisa2017} where  the stability and global existence of weak solutions to a free boundary problem governing the evolution of  polymeric fluids is investigated.  The starting point  in the investigation  of Donatelli and Trivisa in \cite{DonatelliTrivisa2017} is a macroscopic model governing  the suspensions of rod-like molecules  (known as Doi-Model) in compressible fluids.  The model under consideration couples a Fokker-Planck-type equation on the sphere for the orientation distribution of the rods to the Navier-Stokes equations, which are now enhanced by additional stresses reflecting the orientation of the rods on the molecular level.  The coupled problem is 5- dimensional (three-dimensions in physical space and two degrees of freedom on the sphere) and it describes the interaction between the orientation of rod-like polymer molecules on the microscopic scale and the macroscopic properties of the fluid in which these molecules are contained. The macroscopic flow leads to a change of the orientation and, in the case of flexible particles, to a change in shape of the suspended microstructure. This process, in turn yields the production of a fluid stress.  The free-boundary problem is defined by a threshold for the pressure beyond which one has the incompressible Navier-Stokes equations for the fluid and below which one has a compressible model for the gas.
Regarding the literature on polymeric fluids for compressible flows we refer the reader to the articles by Bae and Trivisa\cite{BaeTrivisa2012, BT2013}, Barrett and S\"{u}li \cite{BS2012B, BS2012C}, Donatelli and Trivisa \cite{DonatelliTrivisa2017} and the reference therein.


\subsection{Notations:}
Before formulating the governing equation of the nonlinear system governing our mixture, we fix here some notations we are going to use in the paper.
\subsubsection{Notations of macroscopic variables,  tensors, forces  and coefficients}
\begin{enumerate}[\quad $\star$]
\item $\rho$ denotes the density of the fluid.
\item $\vc{u}$ represents the velocity field.
\item $\psi$ denotes the probability distribution function: $\psi = \psi(\vc{q})$ with $\vc{q}$ a random conformation vector of 
$\vc{q}= (\vc{q_1}^T, \dots, \vc{q_K}^T)^T \in  \R^{K_d}$ of the chain, with $\vc{q_i}$  representing the $d$-component conformation vector  of the $i$-th spring.
\item $(\rho, \vc{u}, \psi)$ denote the macroscopic variables which characterize the state of the polymeric fluid.
\item $M(\vc{q})$ denotes the total Maxwellian.
\item $\widehat{\psi}=\psi/M.$ 
\item $p= p(\rho)$ denotes the pressure.
\item $\mathbb{S}= \mathbb{S}[\rho, \vc{u}]$ denotes the viscous stress tensor.
\item $f$ denotes  a non-dimensional body force.
\item $\vc{\tau}$ denotes the elastic extra stress tensor:  $\vc{\tau} = \vc{\tau}(\psi).$
\item  $\vc{\sigma}(\vc{v}) = \Grad_x \vc{v}.$
\item $\zeta(\rho)$ denotes a drug coefficient $\zeta(\rho) \in \R, \,\, \zeta(\rho) >0.$
\item $D$ denotes the domain of admissible conformation vectors, $D \subset {\R}^K,$
$$D = D_1 \times \dots \times D_K,$$
$ D_i$  bounded open $d$-dimensional balls centered at the origin.
\item $\mathcal{O}_i:=\left[0, \frac{b_i}{2}\right)$ denotes the image of $D_i$ under $\vc{q_i} \in D_i \to \frac{1}{2} |\vc{q}_i|^2.$
\item $U_i$ denotes the spring potential, $U_i \in C^1(\mathcal{O_i}); \R_{\ge 0}), i=1, \dots, K.$  
\item $\vc{A} =(A_{i,j})_{i,j=1}^{K}$ is the symmetric positive definite {\it Rouse matrix} or connectivity matrix.
\item $\eta=\eta(x,t)$ denotes the polymeric number density expressed as 
$$\eta(x,t) = \int_D \psi(x,q,t) dq, \,\, (x,t) \in \Omega \times (0,T]$$
\item  $\mathcal{F}(s)=s(\log s-1)+1,$\qquad  $\displaystyle{P(s)=\frac{s^{\gamma}}{\gamma-1}}.$
\end{enumerate}
\subsubsection{Notations of function spaces}
\begin{enumerate}[\quad $\star$]
\item $L^{p}(0,T;X)$ denotes the Banach set of Bochner measurable functions $f$ from $(0,T)$ to $X$ endowed with either the norm $\Big(\int ^{T}_{0} \|g(\cdot, t)\|^{p}_{X}dt\Big)^{\frac{1}{p}}$ for $1\leq p<\infty$ or $\displaystyle\sup_{t>\infty} \|g(\cdot,t)\|_{X}$ for $p=\infty$. In particular, $f\in L^{r}(0, T; XY)$ denotes $\Big(\int ^{T}_{0} \big\|\big(\|f(t)\|_{Y_{\tau}}\big)\big\|^{p}_{X}dt\Big)^{\frac{1}{p}}$ or $\displaystyle\sup_{t>\infty} \big\|\big(\|f(t)\|_{Y_{\tau}}\big)\big\|_{X}$ for $p=\infty$. The notation
 $L^{p}_{t}L^{q}_{x}$ will abbreviate  the space $L^{p}(0,T;L^{q}(\Omega))$. 
 \item The space $L^{r}_{M}(\Omega\times D)$ denotes the space of measurable functions $f$ with the norm
 $\|f\|_{L^{r}_{M}(\Omega\times D)}=\Big(\int\!\!\int_{\Omega\times D}M|f|d\vc{q}dx\Big)^{1/r}$. For any $r\in[0,\infty)$ we define $Z_{r}=\{f\in L^{r}_{M}(\Omega\times D)\mid f\geq 0 \ \text{a.e. on}\  \Omega\times D\}$.
\item ${\mathcal M}((0,T) \times \Omega)$ is the space of bounded measures on $(0,T) \times \Omega$. 
\item $C(T)$ is a function only depending on initial data and $T$, $C_{w}([0,T];X)$, is the space of continuos function from $(0,T)$ to $X$ endowed with the weak topology.
\item $\rightharpoonup$ and $\rightarrow$ denote weak limit and strong limit, respectively.
\end{enumerate}

\subsection{Modeling}
 The main physical assumptions on our model are outlined below:
\begin{enumerate}[\quad $\star$]

\item
A macro-molecule is idealized as an ``elastic dumbbell" consisting of two
``beads" joined by a spring. The ``bead-spring chain model" (considered in the present article) 
consists of $K+1$ beads coupled with $K$ elastic springs representing a polymer chain.

\item
The polymer molecules are described by their density at each time $t$, position
$x$ and probability distribution $\psi$. This is a kinetic description of the polymer
molecules.

\item
The right-hand side of the Navier--Stokes momentum 
equation includes an elastic extra-stress tensor  $\vc{\tau}$ (produced due to the interaction of the compressible fluid and the polymeric molecules) which is the sum of the classical Kramers expression and a quadratic interaction term. The elastic extra-stress tensor stems from the random movement of the polymer chains and is defined through the associated probability density function that satisfies a Fokker--Planck-type parabolic equation, a crucial feature of which is the presence of a center-of-mass diffusion term.

\item 
The non-Newtonian elastic extra stress tensor $\vc{\tau}$  (cf. \eqref{1.5} below), depends on the probability density function $\psi,$ which, in addition to time  $t$  and space $x,$ also depends on the conformation vector 
$(q_1^T, . . . q_K^T )^T \in \mathbb{R}^{3K},$ with $q_i$ representing the $3$-component conformation/orientation vector of the $i$-th spring in the chain.

\item
 The Kolmogorov equation satisfied by $\psi$ is a second-order parabolic equation, the Fokker--Planck equation, whose transport coefficients depend on the velocity field $\vu$, and the hydrodynamic drag coefficient appearing in the Fokker--Planck equation is, generally, a nonlinear function of the density $\rho.$ 
\end{enumerate}

\subsection{Governing equations}
Our starting point is the governing equation of the general non-homogeneous bead-spring chain models with center of mass diffusion.
This class of models is governed by a system of nonlinear partial differential equations that arise from the kinetic theory of dilute polymeric solutions. The bead-spring chain molecules are dispersed in a compressible isentropic, isothermal Newtonian fluid confined to a bounded Lipschitz domain $\Omega \subset \R^d, d=2$ or $3,$ with boundary $\partial\Omega.$   The governing equation of the system {\bf (P)} read:
\begin{center}
\begin{tcolorbox}
\begin{equation} \label{1.1}
\partial_t \rho+\Div(\rho \vu)=0
 \end{equation}
\begin{equation}\label{1.2}
\partial_t (\rho  \vu) +\Div(\rho \vu\otimes \vu)+ \nabla_x p_{F}= \Div \mathbb{S}[\vc{u}, \rho] + \rho f + \Div \vc{\tau} 
\end{equation}
\begin{equation}\label{1.3}
 \partial_t \psi+\Div(\psi \vu) + \sum_{i=1}^K \nabla_{\vc{q_i}} \cdot (\vc{\sigma(u)} \vc{q_i}(\psi)) 
 \end{equation}
\[
= \eps  \Delta_{x}\left(\frac{\psi}{\zeta(\rho)}\right)+ \frac{1}{4\lambda} \sum_{i=1}^K \sum_{j=1}^K A_{i,j} \Grad_{\vc{q_i}} \cdot 
\left(M \Grad_{\vc{q_i}} \left(\frac{\psi}{\zeta(\rho)  M}\right)\right) 
\]
\end{tcolorbox}
\end{center}
The state of the mixture is characterized by the macroscopic variables:
 the density $\rho,$ the velocity vector field $\vu,$  and the probability distribution function $\vc{\psi}.$ 
 The physical properties of the polymeric fluid are reflected through the constitutive relations.These relations which are stated below  express how the fluid pressure $p_{F}$, the viscous stress tensor $\mathbb{S}= \mathbb{S}[\rho, \vc{u}],$ the elastic extra stress tensor $\vc{\tau}= \vc{\tau(\psi)}$ depend on the macroscopic variables and in addition specify the physical laws that characterize the behavior of the probability distribution function $\vc{\psi} = \vc{\psi(q)}$ in terms of the conformation vector $\vc{q}$ as well as other potentials present in the system. 
The constant parameter $\e$ denotes the center-of-mass diffusion coefficient, which is strictly positive.
The positive parameter $\lambda$ is called the {\em Deborah number}; it characterizes the elastic relaxation property of the fluid.  
\subsection{Constitutive relations} 
\begin{center}
\begin{enumerate}[\quad $\star$]
\item The viscous stress tensor $\mathbb{S}=\mathbb{S}[\rho, \vc{u}]$ follows  Newton's law for viscosity
\begin{equation} \label{1.5}
\mathbb{S}[\rho, \vu]= \mu^S(\rho) \left[D(\vc{u}) -\frac{1}{d} \Div \vc{u} \mathbb{I}
\right] + \mu^B(\rho) \Div \vc{u} \mathbb{I},
\end{equation}
where $\mu^S(\rho)>0,  \mu^B(\rho) \ge 0$ denote the shear and bulk viscosities, $\mathbb{I}$ the $d\times d$ identity tensor  
and $D(\vc{u})$ is the rate of strain tensor
$$D(\vc{u}):= \frac{1}{2} \left( \Grad_x \vc{u} + (\Grad_x \vc{u})^T \right),$$
with  $(\Grad_x \vc{u})(x,t) \in \R^{d\times d}$ and $(\Grad_x \vc{u})_{ij} =\displaystyle{\frac{\partial \vc{u}_i}{\partial x_j}}.$ 
\item  The elastic extra stress tensor $\vc{\tau}$ is defined by
\begin{equation}\label{1.6}
 \vc{\tau}(\psi)(x,t) := \vc{\tau_1}(\psi)(x,t) - \xi \left(\int_D  \psi d\vc{q}\right)^2 \mathbb{I}=\vc{\tau_1}(\psi)(x,t) - \xi \eta^2(x,t)\mathbb{I},
\end{equation} 
with $\xi>0$  and 
\begin{equation}\label{1.7}
\vc{\tau_1}(\psi):= k \left[\left(\sum_{i=1}^K \vc{C_i}(\psi)\right) - (K+1) \int_D \psi(\vc{q}) d\vc{q}\, \mathbb{I}\right], 
\end{equation} 
where $k >0$ and 

$$ \vc{C}_i(\psi)(x,t):= \int_d \psi(x, \vc{q},t) U_1'\left(\frac{1}{2} |\vc{q}_i|^2 \right)\vc{q} \vc{q_i}^T d\vc{q}.$$  
\end{enumerate}
\end{center}
\subsection{Additional hypothesis on the potentials} 
\begin{center}
\begin{enumerate}[\quad $\star$]
\item ${\vc{F}_i}$ denotes the elastic spring-force $\vc{F}_i: D_i \subset \R^d \to \R^d$ of the $i$-th   spring in the chain defined by 
\begin{equation}\label{1.8}
 \vc{F}_i (\vc{q_i}):=U'_i\left(\frac{1}{2} |\vc{q_i}|^2\right)\vc{q_i},\,\,  i=1, \dots, K.
 \end{equation}
Using this notation,  $\vc{C}_i(\cdot)$ can be expressed as
$$ \vc{C}_i(\psi)(x,t):= \int_d \psi(x, \vc{q},t)  \vc{F}_i (\vc{q_i}) \vc{q_i}^T d\vc{q}.$$  
\item $M_i$ represents the partial Maxwellian associated with the spring potential $U_i$ defined by 
\begin{equation}\label{1.9}
M_i(\vc{q_i}):= \frac{1}{\mathcal{Z}_i} e^{-U_i(\frac{1}{2}|\vc{q_i}|^2)}, \,\, \mathcal{Z}_i:= \int_{D_i} e^{-U_i(\frac{1}{2}|\vc{q_i}|^2)} d\vc{q_i}. 
\end{equation}
The (total) Maxwellian in the model is then 
\begin{equation}\label{1.10}
M(\vc{q}):= \displaystyle{{\prod}_{i=1}^K M_i(\vc{q_i})}\,\,\, \forall \vc{q} \in D.
 \end{equation}
\item Observe that,
\begin{equation}\label{1.11}
M(\vc{q}) \Grad_{\vc{q_i}} [M(\vc{q})]^{-1} = -[M(\vc{q})]^{-1}  \Grad_{\vc{q_i}} [M(\vc{q})]
\end{equation}
\[= \Grad_{\vc{q_i}} \left(U_i \left(\frac{1}{2}|\vc{q_i}|^2\right)\right) = U_i'\left(\frac{1}{2} |\vc{q_i}|^2\right)\vc{q_i},\] 
and by definition
$$ \int_D M(\vc{q}) d \vc{q} =1.$$
\item  We assume that $D_i = B(0, \sqrt{b_i})$ with $b_i >0$ is the ball centered at the origin of radius $\sqrt{b_i}$ and that  there exist constants $c_{ij} >0 \, j=1,\dots, 4,$ and 
$\theta_i > 1$ such that the spring potential $U_i \in C^1[0, \frac{b_i}{2})$ and the associated partial Maxwellian  $M_i$ satisfy $\forall \vc{q_i} \in D_i$
\begin{equation}\label{1.12}
c_{i1}[\mbox{dist}(\vc{q_i}, \partial D_i)]^{\theta_i} \le M_i(\vc{q_i}) \le ci2[\mbox{dist}(\vc{q_i}, \partial D_i)]^{\theta_i}
\end{equation}
\begin{equation}\label{1.13}
c_{i3} \le \mbox{dist}(\vc{q_i}, \partial D_i) U_i'\left( \frac{1}{2}|\vc{q_i}|^2\right)\le ci4.
\end{equation}
It follows from \eqref{1.13} that
\begin{equation}\label{1.14a}
\int_{D_i} \left[1 + \left[ U_i \left(\frac{1}{2}|\vc{q_i}|^2\right)\right]^2\right] M_i(\vc{q_i}) d\vc{q_i} < \infty.
\end{equation}
\end{enumerate}
\end{center}
From now on, for simplicity, we will assume that the viscosity coefficients $\mu^{S}, \mu^{B}$ will not depend on the density $\rho$ and we set the drag coefficient $\zeta=1$.

\subsubsection{Further Examples}
In the classical FENE dumbbell model, $K = 1$ and the spring is given by 
$$ \vc{F}(\vc{q})) = \left(1-\frac{|\vc{q}|^2}{b}\right)^{-1} \vc{q}, \vc{q} \in D = B(0,b^{\frac{1}{2}}),$$ corresponding to

$$U (s) = -\frac{b}{2}\log\left(1-\frac{2s}{b}\right) \in O = [0, b ), b > 2.$$
 More generally, in the FENE bead-spring 
 chain model, one considers $K +1$ beads linearly coupled with $K$ springs, each
with a FENE spring potential. Direct calculations show that the partial Maxwellian
$M_i$ and the elastic potentials $U_i, i = 1, . . . , K,$ of the FENE bead spring chain satisfy
the conditions \eqref{1.12} and \eqref{1.13} with $\theta_i := \frac{b_i}{2},$ provided that $b_i > 2, i = 1,...,K.$
Thus, \eqref{1.14a}  also holds when $b_i > 2, i = 1,...,K.$

\subsection{The free boundary problem}
In this article we are concerned with the free-boundary problem for the system \eqref{1.1}- \eqref{1.11}. In particular we consider a compressible model which includes the incompressible case only beyond a  certain threshold for the pressure. Indeed the model  the free boundary problem {($\vc{P_F}$)}  is defined by the following  equations in $(0,T)\times \Omega$:
\begin{center}
\begin{tcolorbox}
\begin{equation} \label{1.14}
\partial_t \rho+\Div(\rho \vu)=0
 \end{equation}
\[
\quad  0 \le \rho \le 1 \nonumber
\]
\begin{equation}\label{1.15}
\partial_t (\rho  \vu) +\Div(\rho \vu\otimes \vu)+ \nabla_x [p_{F}+ \xi \eta^2]= \Div \mathbb{S}[\vc{u}, \rho] + \rho f + \Div \vc{\tau_{1}} 
\end{equation}
\begin{equation}\label{1.16}
 \partial_t \psi+\Div(\psi \vu) + \sum_{i=1}^K \nabla_{\vc{q_i}} \cdot (\vc{\sigma(u)} \vc{q_i}(\psi)) 
 \end{equation}
\[
= \eps  \Delta_{x}\left(\frac{\psi}{\zeta(\rho)}\right)\frac{1}{4\lambda} \sum_{i=1}^K \sum_{j=1}^K A_{i,j} \Grad_{\vc{q_i}} \cdot 
\left(M \Grad_{\vc{q_i}} \left(\frac{\psi}{\zeta M}\right)\right) 
\]
\begin{equation}\label{1.17}
 \frac{\partial \eta}{\partial t} + \Grad_x \cdot (\vc{u} \eta) = \e \Delta_x \left(\frac{\eta}{\zeta(\rho)}\right) 
 \end{equation}
and the  free boundary conditions
\begin{equation}\label{1.18}
\Div \vu = 0 \quad \mbox{a.e} \,\, \mbox{on} \,\, \{ \rho = 1\}
\end{equation}
\begin{equation}\label{1.19}
p_{F} \ge  0  \quad \mbox{a.e} \,\, \mbox{in} \,\, \{ \rho = 1\}
\end{equation}
\begin{equation}\label{1.20}
p_{F} =  0  \quad \mbox{a.e} \,\, \mbox{in} \,\, \{ \rho < 1\}
\end{equation}
\end{tcolorbox}
\end{center}

The unknowns for our problem are the density $\rho,$ the velocity vector field $\vu,$ the pressure $p_{F}$, which is Lagrange multiplier associated with the incompressibility constraint \eqref{1.18}  $\Div \vu = 0$ a.e. in $\{\rho = 1\},$ the probability distribution $\psi$ and the polymeric number density $\eta$. It is important to observe that the pressure $p_{F}$ appears only in what we call the congested regions $\{\rho =1\}$ and that the conditions \eqref{1.19}, \eqref{1.20},  can be formulated as one constraint
\begin{equation}
\rho p_{F} = p_{F}\ge 0.
\label{cp}
\end{equation}
\\
\subsubsection{Boundary conditions}

Let $\partial \bar{D}_i:= D_1 \times \dots \times D_{i-1}\times \partial D_i \times D_{i+1}\times \dots \times D_K.$ 
We consider the problem {$\mathbf{(P_F)}$}  on a bounded domain with the following boundary conditions.

 \begin{eqnarray}
 && \vu=0 \,\,  \, \mbox{ on} \,\  \partial \Omega. \label{1.21}\\
&& \left[\frac{1}{4 \lambda}\sum_{j=1}^K A_{ij} M\Grad_{\vc{q_j}}\left(\frac{\psi}{\zeta(\rho)M}\right) - \sigma(\vc{u})\vc{q_i} \psi\right]\cdot \frac{\vc{q_i}}{|\vc{q_i}|} =0,\label{1.22}\\
&& \quad \quad \quad   \mbox{on}\,\,  \Omega \times \partial \bar{D}_i \times (0, T], \,\,\mbox{for}\,\, i=1, \dots, K,  \nonumber \\
&& \e \Grad_x \left(\frac{\psi}{\zeta(\rho)}\right) \cdot \vc{n} =0 \,\, \mbox{on}\,\, \partial \Omega \times D \times (0,T].\label{1.23}
\end{eqnarray}
where $\vc{q_j}$ is normal to $\partial D_i,$ as $D_i$ is a bounded ball centered at the origin, and $\vc{n}$ is normal to $\partial \Omega.$ 
\subsubsection{Initial data}
The system {$\mathbf{(P_F)}$} must be complemented with initial conditions, namely
\begin{eqnarray}
& & \rho(x,0) = \rho_0(x),\,\, \rho \vc{u} (x,0) = (\rho_0 \vc{u}_0)(x),\,\,  x \in \Omega,  \label{1.24}\\
& & \psi(\cdot, \cdot, 0) = \psi_0(\cdot, \cdot)\ge 0,\,\, \mbox{on} \,\,  \Omega \times D.\label{1.25}\\
& & \eta(x,0) = \int_D \psi_0(x, \vc{q}) d \vc{q}, \,\, \mbox{for}\,\, x\in \Omega.\label{1.26}
\end{eqnarray}

\subsection{Outline and overall strategy of the proof}
The outline of this article and overall strategy of the proof are as follows:
Section \ref{S1} presents  the main motivation for the upcoming investigation, the modeling aspects of the problem: the physical setting, constitutive relations,  the free-boundary problem, the statement of the problem and. Section \ref{S2} introduces  the main result, namely the global existence of the weak solutions to the free-boundary problem.  This is achieved by  rigorously  showing that these solutions can be obtained as the limit of weak solutions to the Doi model for compressible fluids as the adiabatic exponent $\gamma_n \to \infty.$ The approximating scheme and  an outline of the  proof of the global existence of approximate solutions are presented in Section \ref{S3}. The proof follows the line of argument  introduced by Barrett and S\"{u}li   \cite{BS2016} which is based on the  use of the rather special quantity 
$$\frac{\psi}{M}, \,\,\,\,\, M \, \mbox{the Maxwellian},$$
which if bounded appropriately  results to a new formulation of the equation verified by the probability distribution function  and an additional  partial differential equation. Section \ref{S4} presents the proof of the main theorem. 
The global existence of weak solutions to the free boundary problem is obtained by (a) showing the convergence of 
$(\rho_n -1)_+ \to 0$; (b) establishing the $L^1$ uniform bound for the approximate pressure $\rho_n^\gamma$;
(c) establishing the convergence of the approximating sequence $(\rho_n, \vu_n, \psi_n, \eta_n)$  through the proof of compactness for the solution sequence  by  using monotonicity properties of certain crucial quantities that depend on the macroscopic variables. 
Section \ref{S5} presents further extensions and related models.


\section{Main Result}\label{S2}
The goal of this paper is to prove the  existence of weak solutions to the free-boundary problem \eqref{1.15}-\eqref{1.20}, so we introduce the notion of weak solutions we are going to use throughout the paper. 
\subsection{Notion of weak solution to problem  $(P_F)$ } 
 
 \begin{definition} \label{def1}
{\bf [Weak solution of the problem  {($\vc{P_F}$)} ]}\\
A vector $(\rho, \vu, \psi, \eta)$ is called a weak solution to \eqref{1.14}-\eqref{1.20} with boundary data \eqref{1.21}-\eqref{1.23} and  initial data \eqref{1.24}-\eqref{1.26} if the equations
\begin{equation} \label{2.1}
\partial_t \rho+\Div(\rho \vu)=0
 \end{equation}
\begin{equation}\label{2.2}
\partial_t (\rho  \vu) +\Div(\rho \vu\otimes \vu)+ \nabla_x (p_{F}+\xi\eta^{2})= \Div \mathbb{S}[\vc{u}, \rho] + \rho f + \Div \vc{\tau_{1}} 
\end{equation}
\begin{equation}\label{2.3}
 \partial_t \psi+\Div(\psi \vu) + \sum_{i=1}^K \nabla_{\vc{q_i}} \cdot (\vc{\sigma(u)} \vc{q_i}(\psi)) 
 \end{equation}
\[
= \eps  \Delta_{x}\left(\frac{\psi}{\zeta(\rho)}\right)\frac{1}{4\lambda} \sum_{i=1}^K \sum_{j=1}^K A_{i,j} \Grad_{\vc{q_i}} \cdot 
\left(M \Grad_{\vc{q_i}} \left(\frac{\psi}{\zeta(\rho)  M}\right)\right) 
\]

\begin{equation}\label{2.4}
\partial_t \eta + \Div(\eta \vu)   - \e \Delta_x \left(\frac{\eta}{\zeta(\rho)}\right) =0
\end{equation}
are satisfied in the sense of distributions, the divergence free condition $\Div \vu = 0$ is satisfied a.e. in   $\{ \rho = 1\},$
the constrained $0\le \rho \le 1$ is satisfied a.e. in $(0,T) \times \Omega$ and the following regularity properties hold
\[\rho \in C([0,T]; L^p(\Omega)), \,\, 1\le p < \infty,\]
\[\vu \in L^2(0,T;(W_0^{1,2}(\Omega))),\,\, \rho|{\bf u}|^2 \in L^{\infty}(0, T;L^1(\Omega)),\]
\[p_{F} \in {\mathcal M}((0,T) \times \Omega)\]
\[\eta \in L^{\infty}(0,T; L^{2}(\Omega))\cap L^{2}(0,T; \dot{H}^{1}(\Omega))\]
\[\widehat{\psi}\in L^{p}(0,T;Z_{1})\cap H^{1}(0,T;M^{-1}(H^{s}(\Omega\times D))'),  \,\, 1\le p < \infty.\]
\end{definition}
Moreover $p_{F} $ is so regular that the condition 
\[p_{F}(\rho-1)=0, \]
is satisfied in the sense of distribution.
In this work we will  prove the existence of weak solutions to the free-boundary problem \eqref{1.14}-\eqref{1.20} by showing rigorously that they   can be obtained as a limit of  the weak solutions to the macroscopic fluid-particle problem
\begin{equation} \label{2.5}
\partial_t \rho_n+\Div(\rho_n \vu_n)=0
 \end{equation}
\begin{equation}\label{2.6}
\partial_t (\rho_n  \vu_n) +\Div(\rho_n \vu_n \otimes \vu_n)+ \nabla_x (p_{Fn}+\xi\eta_{n}^{2})= \Div \mathbb{S}[\vc{u_n}, \rho_n] + \rho f + \Div \vc{\tau_{1n}}
\end{equation}
\begin{equation}\label{2.7}
 \partial_t \psi_n+\Div(\psi_n \vu_n) + \sum_{i=1}^K \nabla_{\vc{q_i}} \cdot (\vc{\sigma(u_n)} \vc{q_i}(\psi_n)) 
 \end{equation}
\[
= \eps  \Delta_{x}\left(\frac{\psi_n}{\zeta(\rho_n)}\right)\frac{1}{4\lambda} \sum_{i=1}^K \sum_{j=1}^K A_{i,j} \Grad_{\vc{q_i}} \cdot 
\left(M \Grad_{\vc{q_i}} \left(\frac{\psi_n}{\zeta(\rho_n)  M}\right)\right) 
\]

\begin{equation}
\partial_t \eta_n+\nabla\cdot(\eta_n\vu_n) - \e \Delta_x \left(\frac{\eta_n}{\zeta(\rho_n)}\right) =0, \label{2.8}
\end{equation}
where $\vc{\tau_{1n}}=\vc{\tau_{1}}(\psi_{n})$ and 
$$p_{Fn}=(\rho_{n})^{\gamma_{n}},\  \gamma_{n}\to \infty,\ \text{as $n\to \infty$}.$$

Now we are ready to state the main existence results for our problem.
\begin{theorem}
Assume that the boundary conditions \eqref{1.21}-\eqref{1.23} and the initial conditions \eqref{1.24}-\eqref{1.26} are satisfied. Then,  there exists a weak solution (in the sense of Definition \ref{def1}) of the problem \eqref{1.14}-\eqref{1.20}. 
\label{MT}
\end{theorem}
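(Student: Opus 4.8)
# Proof Proposal for Theorem 2.5

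The plan is to obtain the weak solution to the free boundary problem $(\vc{P_F})$ as the limit, as $\gamma_n \to \infty$, of weak solutions $(\rho_n, \vu_n, \psi_n, \eta_n)$ of the macroscopic fluid-particle system \eqref{2.5}--\eqref{2.8} with pressure law $p_{Fn} = \rho_n^{\gamma_n}$. The existence of these approximating solutions, together with the associated energy inequality and free-energy dissipation, is established by the approximation scheme of Barrett and S\"uli (outlined in Section \ref{S3}), based on controlling the quantity $\widehat\psi = \psi/M$. The first step is therefore to record, uniformly in $n$, the a priori bounds coming from the energy/free-energy estimate: the bound on $\sqrt{\rho_n}\vu_n$ in $L^\infty_t L^2_x$, on $\vu_n$ in $L^2_t W^{1,2}_x$, on $\eta_n$ in $L^\infty_t L^2_x \cap L^2_t \dot H^1_x$, the entropy bound $\int\!\!\int M\,\mathcal{F}(\widehat\psi_n)$ and the Fisher-information bound $\int\!\!\int M|\Grad_{\vc{q}}\sqrt{\widehat\psi_n}|^2$, and crucially the bound $\frac{1}{\gamma_n - 1}\int_\Omega \rho_n^{\gamma_n} \le C$ which forces the pressure potential to vanish in the limit.

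Next I would extract weak limits: $\rho_n \to \rho$ in $C_w([0,T];L^p)$ and a.e. (using the renormalized continuity equation and Lions--Feireisl compactness for the density, valid since $\vu_n$ is bounded in $L^2_t W^{1,2}_x$), $\vu_n \weak \vu$ in $L^2_t W^{1,2}_x$, $\eta_n \weak \eta$ in $L^2_t \dot H^1_x$ with strong $L^2$ convergence via an Aubin--Lions argument on \eqref{2.8}, and $\widehat\psi_n \weak \widehat\psi$ in the appropriate weighted space with the $H^1$-in-time bound on $\psi_n$ in the dual space $M^{-1}(H^s(\Omega\times D))'$ giving strong convergence sufficient to pass to the limit in the elastic stress $\vc{\tau}_{1n} = \vc{\tau}_1(\psi_n)$ and in the drift term $\vc{\sigma}(\vu_n)\vc{q_i}\psi_n$. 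The constraint $0 \le \rho \le 1$ is obtained from the pressure bound: since $\frac{1}{\gamma_n-1}\|\rho_n^{\gamma_n}\|_{L^1} \le C$, one has $\|(\rho_n - 1)_+\|_{L^{\gamma_n}} \le C^{1/\gamma_n}((\gamma_n-1))^{1/\gamma_n} \to $ a quantity controllable so that $(\rho_n-1)_+ \to 0$ in every $L^p$, hence $\rho \le 1$ a.e.; together with $\rho \ge 0$ from the continuity equation this gives the constraint, and on $\{\rho = 1\}$ passing to the limit in the continuity equation yields $\Div\vu = 0$ a.e. there.

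The heart of the argument — and the main obstacle — is the identification of the pressure $p_F$ as a Radon measure satisfying $p_F \ge 0$, $p_F(\rho - 1) = 0$, and the passage to the limit in the momentum equation \eqref{2.6}. The sequence $p_{Fn} = \rho_n^{\gamma_n}$ is only bounded in $L^1((0,T)\times\Omega)$, so its weak-$*$ limit lives in $\mathcal{M}((0,T)\times\Omega)$ and one must rule out concentration in time (e.g. by an argument showing equi-integrability in $t$ against test functions, or by working with the effective viscous flux). The crux is to show strong compactness of $\rho_n$ and to pass to the limit in the convective term $\rho_n\vu_n\otimes\vu_n$ and the quadratic term $\xi\eta_n^2$; here the effective-viscous-flux identity of Lions--Feireisl, adapted to the present coupled system (the extra stress $\vc{\tau}_{1n}$ and the $\xi\eta_n^2\mathbb{I}$ term must be shown not to interfere, using the strong convergence of $\eta_n$ and of $\psi_n$), gives the weak continuity of $p_{Fn} - (\mu^S + \mu^B/d)\Div\vu_n$ times suitable test functions, from which one deduces $\rho_n \to \rho$ strongly in $L^1$ and simultaneously that the limit measure $p_F$ is supported in $\{\rho = 1\}$, i.e.\ $p_F(\rho-1) = 0$. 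Finally, lower semicontinuity of the convex energy functionals transfers the energy and free-energy inequalities to the limit, and collecting the regularity statements verifies that $(\rho,\vu,\psi,\eta)$ is a weak solution in the sense of Definition \ref{def1}; this follows the strategy of Lions--Masmoudi \cite{LionsMasmoudi-1999} and of \cite{DonatelliTrivisa2017} for the free-boundary passage, the new difficulty being the simultaneous presence of the Fokker--Planck--Smoluchowski equation and its coupling through $\vc{\tau}_1$ and $\eta$.
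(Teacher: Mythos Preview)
Your overall strategy is correct and matches the paper's: approximate by the compressible model with $p_{Fn}=\rho_n^{\gamma_n}$, pass $\gamma_n\to\infty$, and use Lions--Feireisl/Lions--Masmoudi machinery together with the Barrett--S\"uli free-energy estimates to handle the Fokker--Planck coupling and the extra stress. The identification of the constraint $0\le\rho\le1$, the use of the effective viscous flux to obtain strong convergence of $\rho_n$, and the argument that $p_F$ is concentrated on $\{\rho=1\}$ are all in line with the paper.

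There is, however, one genuine gap. You assert that ``the sequence $p_{Fn}=\rho_n^{\gamma_n}$ is only bounded in $L^1((0,T)\times\Omega)$'' and then take its weak-$*$ limit in $\mathcal{M}$, but nothing in your list of a~priori bounds actually gives this. The energy inequality yields only $\int_\Omega \rho_n^{\gamma_n}\,dx \le C(\gamma_n-1)$, which blows up as $n\to\infty$; this is exactly the bound you quote as ``forcing the pressure potential to vanish in the limit'', and it does \emph{not} give a uniform $L^1$ bound on the pressure itself. Obtaining that uniform bound is a separate, nontrivial step in the paper (its Step~2): one tests the momentum equation against $\chi(t)\,\mathcal{B}\big[b(\rho_n)_\epsilon - \textstyle\oint b(\rho_n)_\epsilon\big]$, where $\mathcal{B}$ is the Bogovski\u{\i} inverse-divergence operator, and uses the renormalized continuity equation to control all the commutator and coupling terms (including $\vc{\tau}_1(\psi_n)$ via \eqref{tauone} and $\eta_n^2$) uniformly in $n$. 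This produces $\int_0^T\!\int_\Omega \rho_n^{\gamma_n+1}\,dx\,dt \le C(T)$ independent of $n$, and then splitting $\{\rho_n>1\}\cup\{\rho_n\le 1\}$ gives the uniform $L^1$ bound on $\rho_n^{\gamma_n}$. Without this step your compactness argument for $p_F$ does not get off the ground.
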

The main Theorem \ref{MT} will be obtained  as  a consequence of the following stability result.

\begin{theorem} 
Let $n \in \N$ be fixed, then there exists a global weak solution  $(\rho_n, \vu_n, \psi_n, \eta_n)$ to 
\eqref{2.5}-\eqref{2.8} such that, as $n \to \infty$
\begin{equation}
\label{2.1.1}
(\rho_{n}-1)_{+}\rightarrow 0\qquad \text{in $L^{\infty}(0,T;L^{p})$, for any $1\leq p<+\infty.$}
\end{equation}
Moreover,
\begin{equation}
(\rho_{n})^{\gamma_{n}} \qquad \text{is bounded in $L^{1}$, for $n$ such that $\gamma_{n}\geq 3$,}
\end{equation}
and up to a subsequence there exists $p_{F}\in \mathcal{M}((0,T)\times \Omega)$ such that
\begin{equation}
(\rho_{n})^{\gamma_{n}} \rightharpoonup p_{F} \qquad \text{as $n\to \infty$}.
\end{equation}
If in addition $\rho_{n0}=\rho_{n}(x,0)\to\rho_{0}$ in $L^{1}$, then the following convergence holds:
 \[
 \rho_n \rightharpoonup \rho \,\, \mbox{weakly in} \,\, L^p((0,T) \times \Omega) \,\, 1\le p < +\infty,
 \]
 \[
\rho_n \vu_n \rightharpoonup \rho\vu  \,\, \mbox{weakly in} \,\, L^p((0,T; L^{r}(\Omega)),\  1\le p < +\infty,\ 1\leq r<2
 \]
 \[
\rho_n \vu_n\otimes \vu_n \rightharpoonup \rho\vu\otimes\vu \,\, \mbox{weakly in} \,\, L^p((0,T; L^{1}(\Omega)),1\le p < +\infty
 \]
 \[
 \psi_n \rightarrow \psi \,\, \mbox{strongly in} \,\, L^p((0,T; L^{1}(\Omega\times D)), \,\, 1\le p < +\infty,
 \]
 \[
 \eta_n \rightarrow \eta  \,\, \mbox{strongly in} \,\, L^{2}(0,T;L^{2}(\Omega))
 \]
 $0\le \rho \le 1$
and 
 $(\rho,  \vu, \psi, \eta)$ is a  weak solution to the problem \eqref{1.14}-\eqref{1.20} in the sense of Definition \ref{def1}. 
 \label{MT2}
 \end{theorem}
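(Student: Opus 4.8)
The plan is to combine the a priori energy estimates available for the approximate system \eqref{2.5}--\eqref{2.8} with the free-energy dissipation inherited from the FENE structure, and then to pass to the limit $\gamma_n\to\infty$ using the same compactness machinery employed in \cite{DonatelliTrivisa2017} and \cite{LionsMasmoudi-1999}. First I would record the basic energy identity for each fixed $n$: testing \eqref{2.6} with $\vu_n$, \eqref{2.7} with the relative entropy variable $\mathcal F(\widehat\psi_n)$ weighted by $M$, and \eqref{2.8} with $\eta_n$, then adding the resulting identities so that the $\Div\vc{\tau_{1n}}$ term in the momentum balance and the drift term $\sum_i\nabla_{\vc q_i}\cdot(\vc\sigma(\vu_n)\vc q_i\psi_n)$ in the Fokker--Planck equation cancel (this is the micro-macro cancellation mentioned in the introduction). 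This produces a bound, uniform in $n$, on $\sqrt{\rho_n}\,\vu_n$ in $L^\infty_tL^2_x$, on $\nabla\vu_n$ in $L^2_tL^2_x$, on $\eta_n$ in $L^\infty_tL^2_x\cap L^2_t\dot H^1_x$, on the relative entropy $\int\!\!\int M\mathcal F(\widehat\psi_n)$ in $L^\infty_t$, and on the associated Fisher-information-type dissipation $\int\!\!\int M|\nabla_{\vc q}\sqrt{\widehat\psi_n}|^2$ and $\int\!\!\int M|\nabla_x\sqrt{\widehat\psi_n}|^2$ in $L^1_t$. The pressure term $\int P(\rho_n)$ with $P(s)=s^{\gamma_n}/(\gamma_n-1)$ sits on the energy side; since $P(s)\le s^3/(\gamma_n-1)$ for $s\le 1$ plus a controlled contribution from $\{\rho_n>1\}$, and since the mass is conserved, this gives $\int_\Omega P(\rho_n)\,dx\le C/(\gamma_n-1)\to 0$, which is exactly what forces $(\rho_n-1)_+\to 0$: indeed $\int_\Omega (\rho_n-1)_+^{\gamma_n}\,dx\le(\gamma_n-1)\int_\Omega P(\rho_n)\,dx\le C$, so $\|(\rho_n-1)_+\|_{L^{\gamma_n}}\le C^{1/\gamma_n}\to 1$, and interpolating with the $L^1$ (or $L^\infty$-in-the-limit) bound yields \eqref{2.1.1}. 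The $L^1$ bound on $(\rho_n)^{\gamma_n}$ for $\gamma_n\ge 3$ comes from the Bogovskii-operator / effective-viscous-flux argument: one tests the momentum equation with $\nabla\Delta^{-1}\rho_n$ (localized) to gain one extra power of integrability on the pressure, exactly as in the Lions--Feireisl theory, using that $\Div\vc{\tau_{1n}}$ and $\xi\eta_n^2$ are controlled in suitable spaces by the entropy and $\eta_n$ bounds; boundedness in $L^1$ then yields, along a subsequence, a limit measure $p_F\in\mathcal M((0,T)\times\Omega)$.

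Next I would extract weak limits: $\rho_n\rightharpoonup\rho$ in every $L^p$ (using the uniform $L^\infty$ bound that emerges in the limit together with mass conservation), $\vu_n\rightharpoonup\vu$ in $L^2_tW^{1,2}_x$, $\eta_n\rightharpoonup\eta$ weakly and, via the Aubin--Lions lemma applied with the $\dot H^1_x$ bound and the bound on $\partial_t\eta_n$ read off from \eqref{2.8}, $\eta_n\to\eta$ strongly in $L^2_tL^2_x$ — which in particular identifies the weak limit of $\eta_n^2$ as $\eta^2$, so the quadratic interaction term passes to the limit. For $\psi_n$ one uses the entropy bound to get equi-integrability of $\psi_n=M\widehat\psi_n$ in $L^1_{x,q}$, the spatial and conformational Fisher information to get compactness in $(x,q)$, and the equation \eqref{2.7} to bound $\partial_t\widehat\psi_n$ in a negative-order space (here the weighted dual $M^{-1}(H^s(\Omega\times D))'$ from Definition \ref{def1}); a Dubinskii/Aubin--Lions-type argument then gives $\psi_n\to\psi$ strongly in $L^p_tL^1_{x,q}$, whence $\vc\tau_{1n}=\vc\tau_1(\psi_n)\to\vc\tau_1(\psi)$ in an appropriate sense. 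For the density I would reproduce the compactness argument of \cite{DonatelliTrivisa2017}: since the limit $\rho$ satisfies $0\le\rho\le1$ and the limit pressure is a nonnegative measure supported where $\rho=1$, one exploits monotonicity of the map associated with the pressure together with the effective-viscous-flux identity (Feireisl--Lions) to show $\overline{\rho_n\Div\vu_n}=\rho\Div\vu$ and propagate strong convergence of $\rho_n$ (or at least enough to pass to the limit in the renormalized continuity equation), yielding $\Div\vu=0$ a.e. on $\{\rho=1\}$ and the complementarity relation $p_F(\rho-1)=0$. Finally the momentum term $\rho_n\vu_n\otimes\vu_n$ is handled by the usual compensated-compactness/div-curl argument: $\rho_n\vu_n\to\rho\vu$ strongly in, say, $L^2_tL^{r}_x$ for $r<2$ (from strong $L^p$-convergence of $\rho_n$ and weak $L^2$-convergence of $\vu_n$, upgraded via compactness of $\rho_n\vu_n$ in time coming from the momentum equation), and $\vu_n\rightharpoonup\vu$ in $L^2_tL^6_x$, so the product converges weakly in $L^p_tL^1_x$.

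The main obstacle I expect is the density compactness step in the congested/free-boundary setting — i.e.\ establishing the complementarity relation $p_F(\rho-1)=0$ and the incompressibility $\Div\vu=0$ on $\{\rho=1\}$ simultaneously with enough compactness of $\rho_n$ to close the limit in the continuity equation. Unlike the classical $\gamma\to\infty$ limit for Navier--Stokes alone, here one must carry the extra stress $\Div\vc{\tau_{1n}}$ and the term $\nabla(\xi\eta_n^2)$ through the effective-viscous-flux identity, so one needs the strong convergence of $\eta_n$ and $\psi_n$ already in hand, and one must verify that these extra terms do not spoil the sign/monotonicity structure that forces $\overline{\rho_n^{\gamma_n}}$ to concentrate on $\{\rho=1\}$. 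A secondary technical difficulty is the degenerate weight $M$ near $\partial D$: the Fisher-information bounds must be used carefully (via the Hardy-type inequalities that follow from \eqref{1.12}--\eqref{1.13}) to obtain genuine compactness of $\widehat\psi_n$ up to the boundary of the conformation domain and to justify that the boundary terms in \eqref{1.22} vanish in the limit. Everything else — the energy estimate, the $L^1$ pressure bound, the passage to the limit in the linear and lower-order terms — follows the now-standard templates in \cite{BS2016}, \cite{DonatelliTrivisa2017} and \cite{LionsMasmoudi-1999}.
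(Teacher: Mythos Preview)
Your overall strategy matches the paper's proof closely: the uniform estimates are taken from the Barrett--S\"uli energy inequality, Step~2 (the $L^1$ bound on $(\rho_n)^{\gamma_n}$) is obtained by testing the momentum equation with the inverse-divergence operator applied to $\rho_n$ exactly as you describe, and Step~3 (strong convergence of $\rho_n$ and the complementarity relation $\rho\pi=\pi$) is carried out via the effective-viscous-flux identity and the $\rho\log\rho$ argument, after first securing the strong convergence of $\eta_n$ and $\psi_n$ so that the extra-stress terms pass through cleanly.

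There is, however, a genuine gap in your Step~1. From the energy inequality and the initial-data hypothesis \eqref{i5} one only obtains $\int_\Omega \rho_n^{\gamma_n}\,dx\le c\gamma_n$, hence $\int_\Omega P(\rho_n)\,dx\le C$ uniformly, \emph{not} $\le C/(\gamma_n-1)$; your splitting into $\{\rho_n\le1\}$ and $\{\rho_n>1\}$ is circular because the contribution from $\{\rho_n>1\}$ is precisely the quantity you are trying to show is small. Worse, even if one grants $\int_\Omega(\rho_n-1)_+^{\gamma_n}\,dx\le C$, the interpolation you invoke does \emph{not} force $(\rho_n-1)_+\to0$ in $L^p$: take $\phi_n\equiv 1/2$ on a fixed set of positive measure, so that $\int\phi_n^{\gamma_n}\to0$ while $\|\phi_n\|_{L^p}$ stays bounded away from~$0$. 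The paper closes this gap with the elementary inequality $(1+x)^{\gamma_n}\ge c_p\,\gamma_n^{\,p}\,x^p$ for $x>0$ and large $\gamma_n$; applying it with $x=\phi_n:=(\rho_n-1)_+$ yields
\[
c_p\,\gamma_n^{\,p}\int_\Omega \phi_n^{\,p}\,dx \;\le\; \int_{\{\rho_n>1\}}\rho_n^{\gamma_n}\,dx \;\le\; c\,\gamma_n,
\]
whence $\|\phi_n\|_{L^p}^p\le c/(c_p\,\gamma_n^{\,p-1})\to0$. With this correction in place, the remainder of your outline is accurate and coincides with the paper's argument.
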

 
 Finally, we want to point out that the solution we are going to construct satisfies the following energy inequality
\[
\frac{d}{dt} \int_{\Omega} \left[\frac{1}{2} \rho |\vu|^2 +\xi \eta^{2} + k(l-(K+1)) {\mathcal   F}(\eta) + k \int_D M {\mathcal F}\left(\frac{\psi}{M}\right) d \vc{q}\right] dx 
\]
\[
+\mu^S \int_{\Omega} |D(\vu) - \frac{1}{d} (\Div_x \vu) \mathbb{I}|^2 dx + \mu^B \int_{\Omega} |\Div_x \vu|^2 dx 
\]
\[
+ 2 \e \xi \int_{\Omega} |\Grad_x \eta|^2 dx + 4 \e k (L - (K+1)) \int_{\Omega} |\nabla_x \sqrt{\eta}|^2 dx
\]
\[
+ 4k \e \int_{\Omega \times D} M |\Grad_x \sqrt{\frac{\psi}{M}}|^2 d \vc{q} dx 
\]
\begin{equation}
+ \frac{k}{\lambda} \sum_{i=1}^K \sum_{j=1}^K \int_{\Omega \times D} M \nabla_{\vc{q}_j} \sqrt{\frac{\psi}{M}} \nabla_{\vc{q}_i}  \sqrt{\frac{\psi}{M}} d \vc{q} dx= \int_{\Omega} \rho f \cdot \vc{u} dx.
\label{energy}
\end{equation}

The rest of the paper is devoted to the proof of the Theorems \ref{MT} and  \ref{MT2}.

\section{Formulation of the approximating problem}\label{S3}

\subsection{The approximating scheme.}
As already mentioned the solutions of the problem  {$\mathbf{(P_F)}$} will be obtained by means of an  an approximating procedure. In this section we will 
set up   the approximating scheme we are going to use. 

Let be $\gamma_{n}$ a sequence of real numbers such that $\gamma_{n}>\frac{3}{2}$, for any $n \in \N$ and $\gamma_{n}\to \infty$ as 
$n\to \infty$, we define  $\{\rho_{n}, \vu_{n}, \psi_{n}, \eta_{n}\}$ as solutions of the following system denoted as ${\mathbf{(P_n)}}$.

\begin{equation} \label{3.1}
\partial_t \rho_n+\Div(\rho_n \vu_n)=0
 \end{equation}
\begin{equation}\label{3.2}
\partial_t (\rho_n  \vu_n) +\Div(\rho_n \vu_n \otimes \vu_n)+ \nabla_x (p_{Fn}+\xi\eta_{n}^{2})= \Div \mathbb{S}[\vc{u_n}, \rho_n] + \rho f + \Div \vc{\tau_{1n}}
\end{equation}
\begin{equation}\label{3.3}
 \partial_t \psi_n+\Div(\psi_n \vu_n) + \sum_{i=1}^K \nabla_{\vc{q_i}} \cdot (\vc{\sigma(u_n)} \vc{q_i}(\psi_n)) 
 \end{equation}
\[
= \eps  \Delta_{x}\left(\frac{\psi_n}{\zeta(\rho_n)}\right)\frac{1}{4\lambda} \sum_{i=1}^K \sum_{j=1}^K A_{i,j} \Grad_{\vc{q_i}} \cdot 
\left(M \Grad_{\vc{q_i}} \left(\frac{\psi_n}{\zeta(\rho_n)  M}\right)\right) 
\]

\begin{equation}
\partial_t \eta_n+\nabla\cdot(\eta_n\vu_n) - \e \Delta_x \left(\frac{\eta_n}{\zeta(\rho_n}\right) =0, \label{3.4}
\end{equation}
where $\vc{\tau_{1n}}=\vc{\tau_{1}}(\psi_{n})$ and 
$$p_{Fn}=(\rho_{n})^{\gamma_{n}},\  \gamma_{n}\to \infty,\ \text{as $n\to \infty$}.$$

The approximating system must be complemented with boundary and initial data as follows.

\subsubsection{Boundary data}
\begin{equation}
 \label{b1}
 \vu_{n}=0, \,\,  \psi_{n}=0,\, \mbox{and} \, \, \eta_{n}=0, \, \mbox{ on} \,\  \partial \Omega.
 \end{equation}
\vspace{0.1in}
\subsubsection{Initial data}
\begin{equation}
\rho_n|_{t=0} = \rho_{n_0}, \quad \rho_n \vu_n|_{t=0} = m_{n_0}, \quad \eta_{n}|_{t=0}=\eta_{n_0},\quad \psi_{n}|_{t=0}=\psi_{n_0}
\label{i1}
\end{equation}
where
\begin{eqnarray}
& &  \rho_{n_0}\geq 0 \quad \mbox{a.e}, \quad  \rho_{n_{0}} \in L^1(\Omega) \cap L^{\gamma_n}(\Omega), \nonumber\\
& & \int (\rho_{n_0})^{\gamma_n} dx \le c \gamma_n \,\, \mbox{for some}\,\, c, \label{i5}\\
& & \quad \quad \quad  m_{n_0} \in L^{\frac{2 \gamma_n}{\gamma_n + 1}} (\Omega), \nonumber \\
&& \rho_{n_0} |\vu_{n_0}|^2  \,\, \mbox{is bounded in}\,\,  L^1(\Omega), \nonumber\\
&& \quad \quad \quad \vu_{n_0} = \frac{m_{n_0}}{\rho_{n_0}}\,\, \mbox{on} \,\, \{\rho_{n_0} >0\}, \nonumber\\
&&  \quad \quad \quad \vu_{n_0} = 0\,\, \mbox{on} \,\, \{\rho_{n_0} = 0\}, \nonumber\\
&&  \quad \quad \quad  \psi_{n_0}  \in L^1(\Omega \times S^{2}) \nonumber\\
&&  \quad \quad \quad \eta_{n_0}  \in L^2(\Omega \times S^{2}).\nonumber
\end{eqnarray}
Furthermore we assume that
\begin{equation}
V_{n}=\int_{\Omega}\!\!\!\!\!\!\!-\rho_{n_0},\quad  0<V_{n}<V<1,\quad V_{n}\to V,
\label{i4}
\end{equation}
and
\begin{equation}\nonumber
\rho_{n_0}\vu_{n}\rightharpoonup m_{0}\quad \text{weakly in $L^{2}(\Omega)$,}
\end{equation}
\begin{equation}
\rho_{n_0}\rightharpoonup \rho_{0}\quad \text{weakly in $L^{1}(\Omega)$.}
\label{i3}
\end{equation}
\subsection{Existence of approximate solutions}

 For any fixed $n\in \N$, the existence of weak solutions for the system \eqref{3.1}-\eqref{3.4} has been proved by Barrett and S\"{u}li (2016) \cite{BS2016} (we refer the reader also to a series of earlier works on related models \cite{BS2011, BS2012A,BS2012B}).
 
 We can summarize their existence result as follows.
\begin{theorem} \label{thm:2.2}
The triple $(\rho_n, \vc{u}_{n}, \widehat{\psi}_n),$ is a global weak solution to problem ${\mathbf{(P_n)}}$ in the sense that 
the following  relations hold true

\begin{equation}
\int_0^T \Bigl< \frac{\partial \rho_n}{\partial t}, \chi \Bigr>_{W^{1,6}(\Omega)} dt - \int_0^T \int_{\Omega} \rho_n \vu_n \cdot \Grad_x \chi  dx dt =0, \label{ceq} 
\end{equation}
for any  $\chi \in L^2(0,T; W^{1,6}(\Omega))$ with $\rho_n(\cdot, 0) = \rho_{n,0}(\cdot),$

\[
\int_0^T \Bigl< \frac{\partial (\rho_n \vu_n)}{\partial t}, \vc{w} \Bigr>_{W^{1,r}_0(\Omega)} dt + \int_0^T \int_{\Omega} [\mathbb{S}(\vu_n) - \rho_n \vu_n \otimes \vu_n - c_p \rho_n^{\gamma_{n}} \mathbb{I}]: \Grad_x \vc{w} dx dt \nonumber
\]
 \begin{equation}
 = \int_0^T \int_{\Omega} [\rho_n \vc{f} \cdot \vc{w} - (\vc{\tau_{1}}(M \widehat{\psi_{n}}) - \xi \eta_n^2 \mathbb{I}): \Grad_x \vc{w} ]dx dt   \nonumber
 \end{equation}
 \begin{equation} 
\text{for all}\   \vc{w} \in L^{\frac{\gamma + \vartheta}{\vartheta}}(0,T; W^{1,r}_0(\Omega)) \label{meq} 
\end{equation}
with $(\rho_n \vc{u_n})(\cdot, 0) = (\rho_{n,0} \vc{u_{n,0}})(\cdot)$ and  $\vartheta(\gamma)$ is defined as 
$$\vartheta(\gamma):= \frac{\gamma}{v(\gamma)} = 
\begin{cases}
& \frac{2\gamma - 3}{3} \,\, \mbox{for} \,\, \frac{3}{2} < \gamma \le 4,\\
& \frac{5}{12} \gamma    \,\, \mbox{for} \,\, 4 \le \gamma
\end{cases}
$$
and $r = \max \left\{4, \frac{6 \gamma}{2\gamma - 3}\right\}.$
\[
\int_0^T \Bigl< M \frac{\partial \widehat{\psi}_n}{\partial t}, \varphi \Bigr>_{H^s(\Omega \times D)} dt +
\frac{1}{4\lambda} \sum_{i=1}^K \sum_{j=1}^K A_{ij} \int_0^T \int_{\Omega \times D} M \nabla_{\vc{q_j}} \widehat{\psi}_n \cdot \nabla_{\vc{q_j}} \varphi d \vc{q} dx dt
\]
\[
+ \int_0^T \int_{\Omega \times D} M [ \e \Grad_x \widehat{\psi}_n - \vu  \widehat{\psi}_n] \cdot \Grad_x \varphi  d\vc{q} dx dt
\]
\[
+ \int_0^T \int_{\Omega \times D} M \sum_{i=1}^K [\sigma(\vu_n) \vc{q_i}] \widehat{\psi} \cdot \Grad_{\vc{q_i}} \varphi d \vc{q} dx dt =0,
\]
\begin{equation}
\text{for all}\  \varphi \in L^2(0,T; H^s(\Omega \times D)). \label{pdfeq} 
\end{equation}
In addition, the weak solution $(\rho_{n}, \vc{u}_{n}, \widehat{\psi_{n}})$ satisfies the inequality 
\[ \frac{1}{2} \int_{\Omega} \rho_{n}(t') |\vu_{n}(t')|^2 dx + \int_{\Omega} P(\rho_{n}(t')) dx + k \int_{\Omega \times D} M \mathcal{F}(\widehat{\psi}_{n}(t')) d \vc{q} dx 
\]
\[
+ \mu^S c_0 \int_0^{t'} \|\vu_{n}\|^2_{H^1(\Omega)}dt
\]
\[
+ k \int_0^{t'} \int_{\Omega \times D} M \left[\frac{a_0}{2\lambda}|\Grad_{\vc{q}}|\sqrt{\widehat{\psi_{n}}}|^2 + 2\e |\Grad_x \sqrt{\widehat{\psi_{n}}}|^2 \right]d\vc{q} dx dt
\]
\[
+ \xi \|\eta_{n}(t')\|^2_{L^2(\Omega)} + 2 \xi \e  \int_0^{t'} \|\Grad_x \eta_{n}\|_{L^2(\Omega)}dt 
\]
\[
\le e^{t'} \left[\frac{1}{2} \rho_0|\vc{u_0}|^2 dx + \int_{\Omega} P(\rho_0)dx + k \int_{\Omega \times D} M \mathcal{F}(\widehat{\psi}_0) d \vc{q} dx \right.
\]
\begin{equation}
\left. +\xi \int_{\Omega} \left( \int_D M \widehat{\psi}_0 d\vc{q} \right)^2 dx + \frac{1}{2} \int_0^{t'} \|f\|^2_{L^{\infty}(\Omega)} dt \int_{\Omega} \rho_0 dx\right].
\label{energyapprox}
\end{equation}

\end{theorem}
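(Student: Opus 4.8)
Fix $n$ once and for all and write $\gamma=\gamma_n>\tfrac32$. The plan is to construct $(\rho_n,\vc u_n,\widehat\psi_n)$ by a multi-layer Faedo--Galerkin approximation that couples Feireisl's construction for the compressible Navier--Stokes system with the Maxwellian-weighted analysis of the Fokker--Planck--Smoluchowski equation of Barrett and S\"uli. Introduce four regularization parameters: a Galerkin dimension $N$ for the velocity field; an artificial-viscosity coefficient $\kappa>0$ turning the continuity equation into $\partial_t\rho+\Div(\rho\vc u)=\kappa\Delta\rho$ with homogeneous Neumann data (together with the associated compensating term in the momentum balance); an artificial-pressure coefficient $\delta>0$ replacing $p_{Fn}=\rho^\gamma$ by $\rho^\gamma+\delta\rho^\beta$ with $\beta$ large; and a cut-off $L$ truncating $\widehat\psi$ via $\beta_L(s)=\min\{s,L\}$ (or a smooth variant) inside the conformation drift $\vc\sigma(\vc u)\vc{q_i}\psi$ of the Fokker--Planck equation and inside the elastic stress $\vc{\tau_1}$. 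One first solves the fully regularized problem and then removes the parameters successively, in the order $N\to\infty$, $L\to\infty$, $\kappa\to0$, $\delta\to0$; the center-of-mass diffusion coefficient $\e>0$ stays fixed. To solve the regularized problem, fix $\vc u$ in the $N$-dimensional Galerkin space: the regularized continuity equation produces a strictly positive smooth density $\rho$ bounded above and below in terms of $\kappa$ and $\vc u$, with the DiPerna--Lions renormalization identity; given $\rho$ and $\vc u$, the Fokker--Planck equation, which is linear in $\widehat\psi$ thanks to the cut-off, is solved by a Galerkin scheme in the Maxwellian-weighted space $H^1_M(\Omega\times D)$, writing the conformation diffusion in the self-adjoint form $\tfrac{1}{4\lambda}\sum_{i,j}A_{ij}\nabla_{\vc{q_i}}\cdot(M\nabla_{\vc{q_j}}\widehat\psi)$ via \eqref{1.11} and using the positive definiteness of the Rouse matrix $\vc A$ for coercivity; non-negativity of $\widehat\psi$ follows by testing with its negative part, and then $\eta=\int_D M\widehat\psi\,d\vc{q}\ge0$ solves \eqref{3.4} by integrating the Fokker--Planck equation over $D$ and using the boundary conditions \eqref{b1} together with the natural no-flux condition on $\partial D$. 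Feeding $\vc{\tau_1}(\psi)$, $\eta$ and $\rho$ back into the finite-dimensional momentum system and applying the Schauder fixed-point theorem on a short time interval gives a local-in-time solution, which the free-energy inequality below extends to $[0,T]$.

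\textbf{The free-energy balance.} The core estimate is an energy identity obtained by pairing the momentum equation with $\vc u$, the renormalized continuity equation with $P'(\rho)$ (and, at the regularized level, with $\delta\tfrac{\beta}{\beta-1}\rho^{\beta-1}$), the Fokker--Planck equation with $k\,\mathcal F'(\widehat\psi)=k\log\widehat\psi$, and the $\eta$-equation with the linear combination of $\xi\eta$ and $\mathcal F'(\eta)$ that appears in \eqref{energy}. One checks that the micro--macro coupling contributions — the divergence of the elastic stress $\Div\vc{\tau_1}$ in the momentum equation, the conformation drift $\sum_i\nabla_{\vc{q_i}}\cdot(\vc\sigma(\vc u)\vc{q_i}\psi)$ in the Fokker--Planck equation, and the term $\xi\nabla_x\eta^2$ — cancel exactly, leaving precisely the dissipation terms displayed in \eqref{energy} on the left and only $\int_\Omega\rho f\cdot\vc u$ on the right; absorbing the body force by Young's inequality and Gronwall's lemma yields \eqref{energyapprox}, with the factor $e^{t'}$, and all the resulting bounds are uniform in $N$, $L$, $\kappa$ and $\delta$ (the additional $\delta\int_\Omega\rho^\beta$ being non-negative and harmless). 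From \eqref{energyapprox} one reads off $\vc u_n\in L^2(0,T;W^{1,2}_0(\Omega))$, $\rho_n|\vc u_n|^2\in L^\infty(0,T;L^1(\Omega))$, $\rho_n\in L^\infty(0,T;L^\gamma(\Omega))$, $\eta_n\in L^\infty(0,T;L^2(\Omega))\cap L^2(0,T;\dot H^1(\Omega))$, and the Maxwellian-weighted control of $\sqrt{\widehat\psi_n}$, whence $\widehat\psi_n$ is bounded in $L^\infty(0,T;L^1_M(\Omega\times D))\cap L^p(0,T;Z_1)$.

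\textbf{Passing to the limits.} In each of the four limit passages the equations furnish bounds on the time derivatives $\partial_t\rho$, $\partial_t(\rho\vc u)$ and $M\partial_t\widehat\psi$ (the last in the weighted negative-order dual), and a Dubinskii-type compactness argument — relying on the compact embedding of $H^1_M(\Omega\times D)$ into $L^1_M(\Omega\times D)$, with the degeneracy of $M$ at $\partial D$ under \eqref{1.12}--\eqref{1.13} handled as in Barrett--S\"uli — produces strong convergence of $\widehat\psi$ in $L^1_M$ and of $\eta$ in $L^2(0,T;L^2(\Omega))$; this is what identifies the limits of the nonlinear terms $\vc{\tau_1}(\psi)$, $\xi\eta^2$ and the drift $\sum_i\nabla_{\vc{q_i}}\cdot(\vc\sigma(\vc u_n)\vc{q_i}\psi_n)$, while standard Aubin--Lions and Lions-lemma arguments handle $\rho_n\vc u_n$ and $\rho_n\vc u_n\otimes\vc u_n$. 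The $N\to\infty$ and $L\to\infty$ passages are then routine. In the $\kappa\to0$ passage one invokes the Lions--Feireisl machinery: $\rho_n$ is a renormalized solution of the continuity equation, the limiting pressure is controlled by the improved $L^{\gamma+\vartheta}$ estimate obtained by testing the momentum equation against a Bogovskii-type corrector of a power of the density — this is precisely where the exponents $\vartheta(\gamma)$ and $r=\max\{4,6\gamma/(2\gamma-3)\}$ in \eqref{meq} originate — and the weak continuity of the effective viscous flux $\overline{(\rho^\gamma+\delta\rho^\beta)\Div\vc u}-\overline{\rho^\gamma+\delta\rho^\beta}\;\overline{\Div\vc u}$ together with a bound on the oscillation defect measure upgrades the weak convergence of $\rho_n$ to strong convergence. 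Repeating this in the final $\delta\to0$ passage, now with $\delta\rho^\beta\to0$ in $L^1$, produces a weak solution $(\rho_n,\vc u_n,\widehat\psi_n)$ satisfying \eqref{ceq}, \eqref{meq}, \eqref{pdfeq} and \eqref{energyapprox}.

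\textbf{Main obstacle.} The delicate point is the interaction of the density compactness with the micro--macro coupling: the effective-viscous-flux identity and the oscillation-defect-measure estimate must be carried through in the presence of the non-local elastic stress $\vc{\tau_1}(\psi)$ and the quadratic term $\xi\eta^2$, which are only weakly convergent a priori, so one must first secure their \emph{strong} compactness from the entropy-dissipation bounds in the degenerate Maxwellian-weighted spaces — which in turn hinges on a careful treatment of the weight $M$ near $\partial D$ under \eqref{1.12}--\eqref{1.13} and of the associated no-flux boundary conditions — before Feireisl's argument for the strong convergence of $\rho_n$ can be run.
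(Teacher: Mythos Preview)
Your construction is viable and hits the right targets (the cut-off $\beta_L$ on $\widehat\psi$, the artificial viscosity in the continuity equation, the artificial pressure, the Maxwellian-weighted entropy estimate, Dubinski\u{\i}-type compactness for $\widehat\psi$, and the Lions--Feireisl effective-viscous-flux machinery for the density), but it is \emph{not} the route taken in the paper. The paper follows Barrett--S\"uli \cite{BS2016} literally: instead of a Faedo--Galerkin layer in the velocity field with parameter $N$, they \emph{semi-discretize in time} with step $\Delta t$, prove existence for the time-discrete coupled problem by Schauder, and then pass to the limit $L\to\infty$ and $\Delta t\to 0_+$ \emph{simultaneously}, with the explicit link $\Delta t = o(L^{-1})$; only afterwards do they remove the artificial viscosity $\alpha$ and the artificial pressure $\kappa(\rho^4+\rho^\Gamma)$, $\Gamma=\max\{\gamma,8\}$. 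So where you have the hierarchy $N\to\infty$, $L\to\infty$, $\kappa\to0$, $\delta\to0$ with the first two limits decoupled, they have the hierarchy $(\Delta t,L)\to(0_+,\infty)$ coupled, then $\alpha\to0_+$, $\kappa\to0_+$.

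What each buys: your Faedo--Galerkin approach is closer to Feireisl's original compressible Navier--Stokes construction and keeps the argument continuous in time throughout, which makes the free-energy identity and the Aubin--Lions steps look more standard. The time-discretization route of \cite{BS2016} pays the price of handling discrete-in-time compactness but has two advantages in this specific coupled setting: the fixed-point map at each time step is a genuinely stationary problem (so Schauder is cleaner), and the joint limit $\Delta t = o(L^{-1})$ lets one absorb the growth coming from the cut-off $\beta_L$ in the drift term directly into the time-step, which sidesteps the need for a separate $L\to\infty$ passage with its own compactness argument. Both lead to the same weak formulation \eqref{ceq}--\eqref{pdfeq} and the same energy inequality \eqref{energyapprox}.
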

\begin{proof}
For the sake of completeness we present now an outline of the proof. For the details we refer the reader to Barrett and S\"{u}li \cite{BS2016}.
The proof relies on two key observations:
\begin{enumerate}[\quad $\bullet$]
\item If $\displaystyle{\frac{\psi}{M}}$ is bounded above then, for $L \in \mathbb{R}_+$ sufficiently large, the third term in \eqref{1.3} is equal to
$$\sum_{i=1}^K \Grad_{\vc{q_i}} \cdot \left( \vc{\sigma}(\vu) \vc{q_i} M \beta^L\left(\frac{\psi}{M}\right)\right),$$
where $\zeta = 1$ and $\beta^L \in C(\mathbb{R})$ denotes a cut-off function such as $\beta^L(s):= \min\{s, L\}.$
It follows that for sufficiently large $L$ any solution of \eqref{1.3} such as $\displaystyle{\frac{\psi}{M}}$ is bounded above by $L$ also satisfies
\begin{equation}\label{3.15}
 \partial_t \psi +\Div(\psi  \vu) + \sum_{i=1}^K \Grad_{\vc{q_i}} \cdot \left( \vc{\sigma}(\vu) \vc{q_i} M \beta^L\left(\frac{\psi}{M}\right)\right)
 \end{equation}
\[
= \eps  \Delta_{x}\left(\frac{\psi}{\zeta(\rho)}\right)\frac{1}{4\lambda} \sum_{i=1}^K \sum_{j=1}^K A_{i,j} \Grad_{\vc{q_i}} \cdot 
\left(M \Grad_{\vc{q_i}} \left(\frac{\psi}{ M}\right)\right) 
\]
in $\Omega \times D \times (0,T]$ supplemented  with the following boundary conditions: 
\begin{equation}
\left[\frac{1}{4\lambda} \sum_{j=1}^K A_{i,j} M \Grad_{\vc{q_j}} \left(\frac{\psi}{M}\right) - \vc{\sigma}(\vu) \vc{q_i} M \beta^L\left(\frac{\psi}{M}\right)\right] \cdot \frac{\vc{q_i}}{|\vc{q_i}|} = 0 \label{3.16}
\end{equation}
on $\Omega \times \partial \bar{D}_i \times (0,T]$  for  $ i= 1, \dots, K.$
\begin{equation}\label{3.16a}
\e \Grad_x \psi \cdot \vc{n} = 0 \,\,\,\, \mbox{on}\,\, \partial \Omega \times D \times (0,T]
\end{equation}
and the initial conditions: 
\begin{equation}
\psi(\cdot, \cdot, 0) = M(\cdot)\beta^L\left(\frac{\psi_0(\cdot,\cdot)}{M(\cdot)}\right) \ge 0,\,\, \mbox{on}\, \Omega \times D.
\end{equation}
The model with cut-off parameter $L > 1$ is further regularized, by introducing into the continuity equation a dissipation term of the form $\alpha \Delta \rho,$ with $\alpha > 0$  and supplementing the resulting parabolic equation with a homogeneous Neumann boundary condition on $\partial \Omega \times (0, T ].$ Moreover, the equation of state (1.3) is replaced by a regularized equation of state, 
$$p_{\kappa}(\rho) = p(\rho)+ \kappa(\rho^4 + \rho^{\Gamma}), \,\,  \mbox{with}\,\,  \kappa \in \mathbb{R}_+ \,\,\, \Gamma = \max\{\gamma, 8\}.$$ 
\item  The second key element of the proof is that instead of taking the limits $\kappa \to 0_+, \alpha \to 0_+, L \to +\infty$ to deduce the existence of solutions to $\vc{(P_n)}$   the authors  (semi)discretize the problem with respect to $t,$ with step size $\Delta t.$ The existence of solutions to this problem is established by employing  Schauder�s fixed point theorem. 
Next one derives bounds on the sequence of solutions to the time discretized problem problem uniform in the time step $\Delta t$ and the cut-off  parameter $L,$ and thus permit the extraction of weakly convergent subsequences, as $L \to \infty$ and $\Delta t \to 0_+$, with $\Delta t = o(L-1),$ when $L \to \infty$. The weakly convergent subsequences are then be shown to converge strongly in suitable norms. This allows to the  passing  to the limit as $L \to + \infty$, with $\Delta t = o(L-1).$  
The result follows by passing  to the limit as $\alpha, \kappa \to 0_+.$ 
\end{enumerate}
\end{proof}

\subsection{A priori estimates and compactness for the approximating sequences}
\label{sec-estimates}
We start this section by collecting all the a priori estimates that can be deduced  by  the energy estimate \eqref{energyapprox}. In particular for any fixed 
 $\gamma_{n}>3/2$  we have

$$\rho_{n} \in L^{\infty}(0,T; L^{\gamma_{n}}(\Omega)), \quad \nabla \vu_n \in L^{2}(0;T; L^{2}(\Omega)),$$
$$\sqrt{\rho_n}\vu_n\in L^{\infty}(0,T; L^{2}(\Omega)), \quad \rho_n \vu_n \in C_{w}([0,T];L^{\frac{2\gamma_{n}}{\gamma_{n}+1}}(\Omega)),$$
$$\eta_n \in L^{\infty}(0,T; L^{2}(\Omega))\cap L^{2}(0,T; \dot{H}^{1}(\Omega))$$
$$\mathcal{F}(\widehat{\psi}_{n}) \in L^{\infty}(0,T; L^{1}(\Omega\times D))$$
$$M^{1/2}\nabla _{x}\sqrt{\widehat{\psi}_{n}} \in L^{2}(0,T;L^{2}(\Omega\times D)), \quad M^{1/2}\nabla_{q}\sqrt{\widehat{\psi}_{n}} \in L^{2}(0,T;L^{2}(\Omega\times D)),$$
By using \eqref{3.3} and \eqref{energyapprox} we get also that
$$M\frac{\partial \widehat{\psi}_{n}}{\partial t}\in  L^{2}(0,T;H^{s}(\Omega\times D)')$$
Moreover, by applying well established techniques (see for example \cite{BS2016} or \cite{DonatelliTrivisa2017}) we  are  able to show to following uniform bound for the density
\begin{equation}\nonumber
\rho_{n}\in  L^{\infty}(0,T; L^{1}\cap L^{\Gamma}(\Omega)),\qquad \text{for any} \ \Gamma\geq 8.
\end{equation}
With the preceeding bounds we can get some further estimates on the elastic stress tensor
$$ \vc{\tau(\psi)}(x,t) := \vc{\tau_1(\psi)}(x,t) - \xi \left(\int_D  \psi d\vc{q}\right)^2 \mathbb{I},$$
where
$$\vc{\tau_1}(\psi):= k \left[\left(\sum_{i=1}^K \vc{C_i}(\psi)\right) - (K+1) \int_D \psi(\vc{q}) d\vc{q}\, \mathbb{I}\right], $$
In fact by following the same lines of arguments as in \cite{BS2016} it is possible to prove that
$$\|\vc{C_i}(M\widehat{\psi}_{n})\|_{L^{2}(0,T;L^{\frac{4}{3}}(\Omega))}+\|\vc{C_i}(M\widehat{\psi}_{n})\|_{L^{\frac{4(d+2)}{3d+4}}(0,T;\Omega)}\leq C$$
from which it is straightforward to deduce that
\begin{equation}
\label{tauone}
\|\vc{\tau_1}(M\widehat{\psi}_{n})\|_{L^{2}(0,T;L^{\frac{4}{3}}(\Omega))}+\|\vc{\tau_1}(M\widehat{\psi}_{n})\|_{L^{\frac{4(d+2)}{3d+4}}(0,T;\Omega)}\leq C
\end{equation}
From the previous a priori estimates, extracting a subsequence, we can deduce  the following convergence results
\begin{equation} 
 \begin{split}
 & \rho_{n} \rightharpoonup \rho \hspace{0.2cm} \text{ weakly in} \hspace{0.2cm} L^{\infty}(0,T;L^{p}(\Omega)\!),\ \rho \in L^{\infty}(0,T; L^{1}\cap L^{p}(\Omega)\!),\ 1\leq p<\!\!+\infty,\\
 & \sqrt{\rho_{n}} \vu_{n} \rightharpoonup \sqrt{\rho}\vu \hspace{0.2cm} \text{ weakly in} \hspace{0.2cm} L^{2}(0,T; L^{2}(\Omega)),\\
 & \vu_{n} \rightharpoonup \vu \hspace{0.2cm} \text{weakly in} \hspace{0.2cm} L^{2}(0,T; H^{1}(\Omega)),\\
 & \rho_{n} \vu_{n} \rightharpoonup \rho \vu \hspace{0.2cm} \text{weakly in} \hspace{0.2cm}L^{2}(0,T; L^{\frac{6\gamma}{\gamma+6}}(\Omega)),\\
 & \rho_{n}\vu_{n}\otimes\vu_{n} \rightharpoonup  \rho\vu\otimes\vu\ \hspace{0.2cm} \text{in weakly}\hspace{0.2cm}L^{2}(0,T; L^{\frac{6\gamma}{4\gamma+3}}(\Omega)), \\
  & \eta_{n} \rightharpoonup \eta \hspace{0.2cm} \text{weakly in} \hspace{0.2cm} L^{2}(0,T; H^{1}(\Omega)\!),\quad \!\!\!\eta \in L^{\infty}(0,T; L^{2}(\Omega)\!) \cap L^{2}(0,T; H^{1}(\Omega)\!),\\
& M^{1/2}\nabla _{x}\sqrt{\widehat{\psi}_{n}} \rightharpoonup M^{1/2}\nabla _{x}\sqrt{\widehat{\psi}}\hspace{0.2cm} \text{weakly in} \hspace{0.2cm} L^{2}(0,T;L^{2}(\Omega\times D)),\\
& M^{1/2}\nabla _{q}\sqrt{\widehat{\psi}_{n}} \rightharpoonup M^{1/2}\nabla _{q}\sqrt{\widehat{\psi}}\hspace{0.2cm} \text{weakly in} \hspace{0.2cm} L^{2}(0,T;L^{2}(\Omega\times D)),\\
& M\frac{\partial \widehat{\psi}_{n}}{\partial t} \rightharpoonup M\frac{\partial \widehat{\psi}}{\partial t}\hspace{0.2cm} \text{weakly in} \hspace{0.2cm}   L^{2}(0,T;H^{s}(\Omega\times D)'),\\
&\psi_{n} \rightarrow \psi\hspace{0.2cm} \text{ strongly in} \hspace{0.2cm} L^{p}(0,T; L^{1}(\Omega\times D)),\ p\geq 1,\\
&  \vc{\tau}(\widehat{\psi}_{n})\rightarrow \vc{\tau}(\widehat{\psi})\hspace{0.2cm} \text{strongly in} \hspace{0.2cm} L^{r}(0,T;\Omega)
\end{split}
  \label{eq:2.16}
\end{equation}

\section{Proof of the Main Theorem \ref{MT}}\label{S4}
In this section we are going to prove the existence of a global weak solution for the problem ${\mathbf{(P_F)}}$,  the Main Theorem \ref{MT}. We start by showing a stability result for the approximating sequences,  Theorem \ref{MT2}.
\subsection{Proof of the  Theorem  \ref{MT2}}
One of the main issues in the proof is to get a uniform $L^{1}$ bound  in $n\in \N$  for $\rho_{n}^{\gamma_{n}}$. Indeed from \eqref{energyapprox} we only have $\int \rho_{n}^{\gamma_{n}}\leq C(\gamma_{n}-1)$.

 For simplicity we divide the proof in different steps. \\[0.5cm]
{\bf Step 1: Convergence of  $\mathbf{(\rho_{n}-1)_{+}}$ to $0$.}\\
The energy inequality \eqref{energyapprox} with the initial condition \eqref{i5} gives
\begin{equation}
\label{e1}
\int_{\Omega}(\rho^{\gamma_{n}})_{n} dx\leq (\gamma_{n}-1)E_{n_{0}}+\int_{\Omega} (\rho_{n_0})^{\gamma_n} dx \leq (\gamma_{n}-1)E_{n_{0}}+c \gamma_n
\leq c\gamma_{n}.
\end{equation}
Since $\gamma_{n}\to \infty$ there exists  $n\in \N$ such that $\gamma_{n}>p$,  $1<p<+\infty$, then by H\"older inequality we get
\begin{equation}\nonumber
\|\rho_{n}\|_{L^{\infty}_{t}L^{p}_{x}}\leq \|\rho_{n}\|_{L^{\infty}_{t}L^{1}_{x}}^{\theta_{n}} \|\rho_{n}\|_{L^{\infty}_{t}L^{1}_{x}}^{1-\gamma_{n}}\leq
V_{n}^{\theta_{n}}(c\gamma_{n})^{\frac{1-\theta_{n}}{\gamma_{n}}},
\end{equation}
where $V_{n}$ is defined in \eqref{i4} and $\displaystyle{\frac{1}{p}=\theta_{n}+\frac{1-\theta_{n}}{\gamma_{n}}}$.  We have that $\displaystyle{\theta_{n}\to \frac{1}{p}}$, as $n\to \infty$ and we end up with
\begin{equation}\nonumber
\|\rho_{n}\|_{L^{\infty}_{t}L^{p}_{x}}\leq \liminf_{n\to\infty}\|\rho_{n}\|_{L^{\infty}_{t}L^{p}_{x}}\leq M^{1/p}.
\end{equation}
We define the function $\phi_{n}$ as follows
$$\phi_{n}=(\rho_{n}-1)_{+},$$
by using again the energy inequality \eqref{energyapprox} we can compute
\begin{equation}
\label{e4}
\int_{\Omega}(1+\phi_{n})^{\gamma_{n}}\mathbf{1}_{\{\phi_{n}>0\}}dx \leq \int_{\Omega}\rho^{\gamma_{n}} dx\leq c\gamma_{n}.
\end{equation}
We apply the inequality
\begin{equation}\nonumber
(1+x)^{k}\geq 1+c_{p}k^{p}x^{p}, \quad p>1, k \ \text{large}, \ x>0
\end{equation}
 with $k=\gamma_{n}$, $x=\phi_{n}$ to the right hand side of \eqref{e4}, so we obtain
\begin{equation}\nonumber
c_{p}\gamma_{n}^{p}\int_{\Omega}\phi_{n}^{p}dx\leq |\Omega|+c_{p}\gamma_{n}^{p}\int_{\Omega}\phi_{n}^{p}dx\leq\int_{\Omega}(1+\phi_{n})^{\gamma}\mathbf{1}_{\{\phi_{n}>0\}}dx\leq c\gamma_{n}.
\end{equation}
Therefore we have
\begin{equation}\nonumber
\int_{\Omega}\phi_{n}^{p}dx\leq \frac{c}{c_{p}\gamma_{n}^{p-1}},
\end{equation}
and, as $n\to \infty$ we get
\begin{equation}\nonumber
(\rho_{n}-1)_{+}\rightarrow 0 \qquad \text{in $L^{\infty}(0,T;L^{p}(\Omega))$,  $1\leq p<+\infty$.}
\end{equation}\\[0.5cm]

{\bf Step 2: $\mathbf{L^{1}}$ uniform bound of $\mathbf{(\rho_{n})^{\gamma_{n}}}$.}\\
In order to prove a uniform bound for the pressure $p_{F}$, we start  by assuming that we know 
\begin{equation}
(\rho_{n})^{\gamma_{n}+1} \quad \text{is uniformly bounded in $L^{1}(0,T;L^{1}(\Omega))$},
\label{e8}
\end{equation}
hence we have
\begin{equation}
\begin{split}
\int_{0}^{T}\!\!\int_{\Omega}(\rho_{n})^{\gamma_{n}}dxdt&=\int_{0}^{T}\!\!\left(\int_{\Omega\cap\{\rho_{n}>1\}}(\rho_{n})^{\gamma_{n}}dx+\int_{\Omega\cap\{\rho_{n}\leq1\}}(\rho_{n})^{\gamma_{n}}dx\right)dt\\
&\leq \int_{0}^{T}\!\!\left(\int_{\Omega}\left((\rho_{n})^{\gamma_{n}+1} +\rho_{n}\right)dx \right)dt.
\end{split}
\label{e9}
\end{equation}
Since $\rho_{n}\in L^{\infty}(0,T;L^{1}(\Omega))$ and  \eqref{e8} holds, from \eqref{e9} it follows the uniform $L^{1}$ bound for $(\rho_{n})^{\gamma_{n}}$.

The only thing we need to  prove is  \eqref{e8}. We recall  that for $\rho_{n}$ we don't have $L^{\infty}$ bounds, but on the other hand, 
because of \eqref{e1} there exists a constant $\tilde{c}$ such that for any $n\in \N$ the following estimate holds
\begin{equation}
\|\rho_{n}\|_{L^{\infty}_{t}L^{\gamma_{n}}_{x}}\leq \tilde{c},
\label{e10}
\end{equation}
where $\displaystyle{\tilde{c}=\sup_{\gamma>0}(c\gamma)^{1/\gamma}}$.

Let us   define, now, the operator $\mathcal{B}$ as the inverse of the divergence operator. We denote the solution $v$ of
\[
\Div v=g \hspace{0.2cm} \text{in} \hspace{0.2cm} \Omega, \quad v=0 \hspace{0.2cm} \text{on} \hspace{0.2cm} \partial \Omega.
\]
by $v=\mathcal{B}g$. The operator $\mathcal{B}=( \mathcal{B}_{1}, \mathcal{B}_{2}, \mathcal{B}_{3})$  enjoys the following properties
\[
\mathcal{B}: \Big\{g\in L^{p}; \int_{\Omega}g dx=0 \Big\} \rightarrow W^{1,p}_{0}(\Omega),
\]
\[
\|\mathcal{B}(g)\|_{W^{1,p}(\Omega)} \leq C \|g\|_{L^{p}(\Omega)}.
\]
If $g$ can be written as $g=\Div h$ for a certain $h \in L^{r}$ with $h\cdot \hat{n}=0$ on $\partial\Omega$, then
\[
\|\mathcal{B}(g)\|_{L^{r}(\Omega)} \leq C \|h\|_{L^{r}(\Omega)}.
\]
With same lines of arguments as in \cite{DonatelliTrivisa2017} it is possible to prove that our approximating sequences satisfy also the equation \eqref{3.1} in the sense of renormalized solutions, namely 
\begin{equation} 
\label{eq:4.1}
\partial_{t}b(\rho_{n})_{\epsilon}+\Div (b(\rho_{n})_{\epsilon}u)+\Big(\big[ b'(\rho_{n})\rho_{n}-b(\rho_{n})\big] \Div \vu_{n} \Big)_{\epsilon}=r_{\epsilon},
\end{equation}
where as proved in Lions \cite{Lions1998}, $r_{\epsilon} \rightarrow 0$ in $L^{2}((0,T)\times\R^{3})$. Let us  take a test function of the form
$$\phi_{i}=\chi(t)\mathcal{B}_{i}\Big[b(\rho_{n})_{\epsilon}- \oint_{\Omega}b(\rho_{n})_{\epsilon}dy\Big],$$ 
where
\[ \oint_{\Omega}b(\rho_{n})_{\epsilon}dy=\frac{1}{|\Omega|}\int_{\Omega}b(\rho_{n})_{\epsilon}dy, \quad \chi \in \mathcal{D}(0,T)
\]
and test it against (\ref{3.2}). Then, with the aid of (\ref{eq:4.1}) and by setting for simplicity all the constants equal to 1 we can compute
\begin{equation*}
 \begin{split}
 \int^{T}_{0}\!\!\int_{\Omega} \chi \rho_{n}^{\gamma_{n}}b(\rho_{n})_{\epsilon}dxdt 
 &= \int^{T}_{0}\!\!\int_{\Omega} \chi \rho_{n}^{\gamma_{n}} \Big[\oint_{\Omega}b(\rho_{n})_{\epsilon}dy \Big]dxdt \\
 &-\int^{T}_{0}\!\!\int_{\Omega} \chi_{t}\rho_{n} \vu_{n} \cdot \mathcal{B}\Big[ b(\rho_{n})_{\epsilon}-\oint_{\Omega}b(\rho_{n})_{\epsilon}dy\Big] dxdt\\
 &+\int^{T}_{0}\!\!\int_{\Omega} \chi \rho_{n} \vu_{n}  \cdot \mathcal{B}\Big[ \big( (b^{'}(\rho_{n})\rho_{n}-b(\rho_{n}))\Div \vu_{n} \big)_{\epsilon}\\
 & -\oint_{\Omega} \big( (b^{'}(\rho_{n})\rho_{n}-b(\rho_{n}))\Div \vu_{n} \big)_{\epsilon}dy\Big]dxdt\\
 & -\int^{T}_{0}\!\!\int_{\Omega} \chi \rho_{n} \vu_{n}  \cdot \mathcal{B}\Big[ r_{\epsilon} -\oint_{\Omega}r_{\epsilon}dy\Big]dxdt\\
 &+ \int^{T}_{0}\!\!\int_{\Omega}\chi \rho_{n}\vu_{n}  \cdot \mathcal{B}\Big[\nabla \cdot \big(b(\rho_{n})_{\epsilon} \vu_{n}  \big)\Big]dxdt\\
 &- \int^{T}_{0}\!\!\int_{\Omega}\chi \rho_{n} \vu_{ni}\vu_{nj} \partial_{i}\mathcal{B}_{j}\Big[ b(\rho_{n})_{\epsilon} -\oint_{\Omega}b(\rho_{n})_{\epsilon}dy\Big]dxdt \\
 & + \int^{T}_{0}\!\!\int_{\Omega}\chi \partial_{i}\vu_{nj}\partial_{i} \mathcal{B}_{j} \Big[ b(\rho_{n})_{\epsilon} -\oint_{\Omega}b(\rho_{n})_{\epsilon}dy\Big]dxdt\\
 & + \int^{T}_{0}\!\!\int_{\Omega}\chi \Div \vu_{n}  \Big[ b(\rho_{n})_{\epsilon} -\oint_{\Omega}b(\rho_{n})_{\epsilon}dy\Big]dxdt \\
 &- \int^{T}_{0}\!\!\int_{\Omega}\chi \eta^{2}_{n}\Big[ b(\rho_{n})_{\epsilon} -\oint_{\Omega}b(\rho_{n})_{\epsilon}dy\Big]dxdt\\
 & + \int^{T}_{0}\!\!\int_{\Omega}\chi \vc{\tau_{1}}_{ij}(\psi_{n})\partial_{i} \mathcal{B}_{j} \Big[ b(\rho_{n})_{\epsilon} -\oint_{\Omega}b(\rho_{n})_{\epsilon}dy\Big]dxdt \\
 &=I_{1} +\cdots +I_{11}.
 \end{split}
\end{equation*}
By using the properties of the operator $\mathcal{B}$, the a priori bounds of the Section \ref{sec-estimates} and \eqref{e10}  we estimate  each one of the terms $I_{1}, \cdots, I_{11}$. For details, 
see Feireisl \cite{DonatelliTrivisa2017}.\\

For $I_{1}$ we have
$$I_{1} \lesssim C(T).$$
Concerning $I_{2}$ we get
\begin{equation*}
 \begin{split}
I_{2} &\lesssim  \|\rho_{n} \vu_{n} \|_{L^{\infty}(0,T; L^{\frac{2\gamma_{n}}{\gamma_{n}+1}}(\Omega))} \|b(\rho_{n})_{\epsilon}\|_{L^{\infty}(0,T; L^{\frac{6\gamma_{n}}{5\gamma_{n}-3}}(\Omega))}\\
&\leq C(T) \|b(\rho_{n})_{\epsilon}\|_{L^{\infty}(0,T; L^{\frac{6\gamma_{n}}{5\gamma_{n}-3}}(\Omega))}.
\end{split}
\end{equation*}
For $I_{3}$, $I_{4}$  and $I_{5}$ we have,
\begin{equation*}
 \begin{split}
 I_{3} +I_{4}&\lesssim  \|\rho_{n}\|_{L^{\infty}(0,T; L^{\gamma}(\Omega))} \|\nabla \vu_{n} \|^{2}_{L^{2}(\Omega\times(0,T))} \|b(\rho_{n})_{\epsilon}\|_{L^{\infty}(0,T;L^{\frac{3\gamma_{n}}{2\gamma_{n}-3}}(\Omega) )} \\
 &\leq C(T) \|b(\rho_{n})_{\epsilon}\|_{L^{\infty}(0,T;L^{\frac{3\gamma_{n}}{2\gamma_{n}-3}}(\Omega) )}.
 \end{split}
\end{equation*}
$$I_{5} \lesssim \|\rho_{n} \vu_{n} \|_{L^{\infty}(0,T;L^{\frac{2\gamma_{n}}{\gamma_{n}+1}}(\Omega))} \|r_{\epsilon}\|_{L^{2}(\Omega\times(0,T))} \leq C(T)\|r_{\epsilon}\|_{L^{2}(\Omega\times(0,T))}.$$
We estimate now $I_{6}+I_{7}$,
\begin{equation*}
 \begin{split}
 I_{6} + I_{7} &\lesssim  \|\rho_{n}\|_{L^{\infty}(0,T; L^{\gamma_{n}}(\Omega))} \|\nabla \vu_{n} \|^{2}_{L^{2}(\Omega\times(0,T))} \|b(\rho_{n})_{\epsilon}\|_{L^{\infty}(0,T;L^{\frac{3\gamma_{n}}{2\gamma_{n}-3}}(\Omega) )} \\
 &\leq C(T) \|b(\rho_{n})_{\epsilon}\|_{L^{\infty}(0,T;L^{\frac{3\gamma_{n}}{2\gamma_{n}-3}}(\Omega) )}.
 \end{split}
\end{equation*}
For  the terms $I_{8}$,  $I_{9}$ we have
$$I_{8} + I_{9} \lesssim \|\nabla \vu_{n} \|_{L^{2}(\Omega\times(0,T))} \|b(\rho_{n})_{\epsilon}\|_{L^{2}(\Omega\times(0,T))} \leq C(T)\|b(\rho_{n})_{\epsilon}\|_{L^{2}(\Omega\times(0,T))}.$$
and finally the last two terms can be estimates as follows
\begin{equation*}
 \begin{split}
  I_{10} + I_{11} & \lesssim \|\eta_{n}\|^{2}_{L^{2}(0,T; L^{6}(\Omega))} \|b(\rho_{n})_{\epsilon}\|_{L^{\infty}(0,T; L^{\frac{3}{2}} (\Omega))}\\
  &+\|\vc{\tau_{1}}\psi_{n}\|_{L^{2}(0,T; L^{4/3}(\Omega))}\|b(\rho_{n})_{\epsilon}\|_{L^{2}(0,T; L^{4} (\Omega))}\\
  & \leq C(T) \|b(\rho_{n})_{\epsilon}\|_{L^{\infty}(0,T; L^{4} (\Omega))}
 \end{split}
\end{equation*}
In sum,
\begin{equation} \nonumber
 \begin{split}
 & \int^{T}_{0}\!\!\int_{\Omega} \chi \rho_{n}^{\gamma_{n}}(b(\rho_{n}))_{\epsilon}dxdt \\
 & \leq C(T) + \|b(\rho_{n})_{\epsilon}\|_{L^{\infty}(0,T; L^{\frac{6\gamma_{n}}{5\gamma_{n}-3}}(\Omega))}+ \|b(\rho_{n})_{\epsilon}\|_{L^{\infty}(0,T; L^{\frac{3\gamma_{n}}{2\gamma_{n}-3}}(\Omega))} \\
 & + \|b(\rho_{n})_{\epsilon}\|_{L^{\infty}(0,T; L^{4}(\Omega))} + \|b(\rho_{n})_{\epsilon}\|_{L^{2}(\Omega\times(0,T))}+\|r_{\epsilon}\|_{L^{2}(\Omega\times(0,T))}.
 \end{split}
\end{equation}
By taking the limit $\epsilon \rightarrow 0$,
\begin{equation} \nonumber
 \begin{split}
 & \int^{T}_{0}\!\!\int_{\Omega} \chi \rho_{n}^{\gamma_{n}}b(\rho_{n})dxdt \\
 & \leq C(T) + \|b(\rho_{n})\|_{L^{\infty}(0,T; L^{\frac{6\gamma_{n}}{5\gamma_{n}-3}}(\Omega))}+ \|b(\rho_{n})\|_{L^{\infty}(0,T; L^{\frac{3\gamma_{n}}{2\gamma_{n}-3}}(\Omega))} \\
 & + \|b(\rho_{n})\|_{L^{\infty}(0,T; L^{4}(\Omega))} + \|b(\rho_{n})\|_{L^{2}(\Omega\times(0,T))}.
 \end{split}
\end{equation}
We approximate $z\mapsto z$ by a sequence of $\{b_{n}\}$ in (\ref{eq:4.1}), and approximate $\chi$ to the identity function of $(0,T)$. Then,
\begin{equation} \label{eq:4.5}
 \begin{split}
 \int^{T}_{0}\!\!\int_{\Omega} \rho_{n}^{\gamma_{n}+1}dxdt &\leq C(T) + \|\rho_{n}\|_{L^{\infty}(0,T; L^{\frac{6\gamma_{n}}{5\gamma_{n}-3}}(\Omega))}+ \|\rho_{n}\|_{L^{\infty}(0,T; L^{\frac{3\gamma_{n}}{2\gamma_{n}-3}}(\Omega))} \\
 & + \|\rho_{n}\|_{L^{\infty}(0,T; L^{4}(\Omega))} + \|\rho_{n}\|_{L^{2}(\Omega\times(0,T))}.
 \end{split}
\end{equation}
Since $\gamma_{n}\to \infty$ we can always assume that $\gamma_{n}\geq N=3$, hence  by taking into account that $\rho_{n}\in L^{\infty}(0,T;L^{1}(\Omega))$ and \eqref{e10} we have that the right hand side of \eqref{eq:4.5} is uniformly bounded and we can conclude that 
$$\int^{T}_{0}\!\!\int_{\Omega} \rho_{n}^{\gamma_{n}+1}dxdt \leq C(T)$$ 
which completes the proof of \eqref{e8}.\\[0.5cm]

{\bf Step 3: Convergence of  the approximating scheme.}\\
The  compactness properties of  the approximating sequence $\{\rho_{n}, \vu_{n}, \eta_{n}, \psi_{n}\}$ stated in Section \ref{sec-estimates} and the bounds of the Step 1 and Step 2 entail\begin{equation*}\nonumber
 \begin{split}
 & \rho_{n}\vu_{n} \rightarrow \rho \vu \hspace{0.2cm} \text{in} \hspace{0.2cm} L^{p}(0,T; L^{r}(\Omega)) \hspace{0.2cm} \text{for all} \hspace{0.2cm} 1\leq p<\infty, \quad 1\leq r<2, \\
 & \vu_{n} \rightarrow \vu \hspace{0.2cm} \text{in} \hspace{0.2cm} L^{p}(\Omega\times (0,T))\cap\{\rho_{n}>0\} \hspace{0.2cm} \text{for all} \hspace{0.2cm} 1\leq p<2, \\
 & \vu_{n} \rightarrow \vu \hspace{0.2cm} \text{in} \hspace{0.2cm} L^{2}(\Omega\times (0,T))\cap\{\rho_{n} \ge \delta\} \hspace{0.2cm} \text{for all} \hspace{0.2cm} \delta>0, \\
 & \rho_{n}\vu_{ni}\vu_{nj} \rightarrow \rho \vu_{i}\vu_{j} \hspace{0.2cm} \text{in} \hspace{0.2cm} L^{p}(0,T; L^{1}(\Omega)) \hspace{0.2cm} \text{for all} \hspace{0.2cm} 1\leq p<\infty,\\
&  (\rho_{n})^{\gamma_{n}}\rightharpoonup \pi, \hspace{0.2cm} \text{where $\pi\in \mathcal{M}((0,T)\times\Omega).$}
 \end{split}
\end{equation*}
With the above convergence result  and those one obtained in the Section \ref{sec-estimates} we can pass into the weak limit in the system \eqref{3.1}-\eqref{3.4}, and we get that $\rho, \vu, \eta, \psi$ is a weak solution of the problem $\mathbf{(P_{F})}$ provided we prove the conditions \eqref{1.18}-\eqref{1.20}. This is equivalent to the proof of 
\begin{equation}
\label{e12}
\rho\pi=\pi.
\end{equation}
Setting $s_{n}=\rho_{n}\log\rho_{n}$ and $\bar{s}=\overline{\rho\log\rho}$ 
and using (\ref{3.1}) we get
\begin{equation} \nonumber
(\rho_n \log \rho_n)_{t} +\nabla \cdot(\rho_n \log \rho_n \vu_n)+ (\nabla \cdot \vu_n)\rho_n=0.
\end{equation}
 Then  we apply $(-\Delta)^{-1}\nabla \cdot $ to (\ref{3.2}), 
\[
\frac{d}{dt}\Big[(-\Delta)^{-1}\nabla \cdot(\rho_n \vu_n)\Big]+(-\Delta)^{-1}\partial_{i}\partial_{j}(\rho_n {\vu_n}_{i}{\vu_n}_{j})+2\nabla \cdot \vu_n -\rho^{\gamma_{n}}-\eta_n^{2}=\]
\[(-\Delta)^{-1}\nabla \cdot (\nabla \cdot \vc{\tau_{1n}}).
\]
By following the same procedure as in \cite{DonatelliTrivisa2017} and by taking the limit as $n\rightarrow \infty$ we end up with relation,
\begin{equation} \nonumber
 \begin{split}
 & 2\Big[\overline{s}_{t}+\nabla \cdot(\vu \overline{s})\Big]+\overline{\rho^{\gamma+1}} \\
 &= -\rho\eta^{2}-\rho\Big[(-\Delta)^{-1}\nabla \cdot(\nabla \cdot \sigma-\nabla \eta)\Big] +\frac{d}{dt}\Big[\rho (-\Delta)^{-1}\nabla \cdot(\rho \vu)\Big] \\
 &+ \nabla \cdot\Big[\rho \vu (-\Delta)^{-1}\nabla \cdot(\rho \vu)\Big] \\
 &+ \rho \Big[(-\Delta)^{-1}\partial_{i}\partial_{j}(\rho \vu_{i}u_{j}) -u\cdot \nabla (-\Delta)^{-1}\nabla \cdot(\rho \vu)\Big].
 \end{split}
\end{equation}

Let $s=\rho \log \rho$, exactly as before (for the details of the proof we refer to \cite{DonatelliTrivisa2017}), we obtain that
\begin{equation} \nonumber
 \begin{split}
 & 2\Big[{s}_{t}+\nabla \cdot(\vu {s})\Big]+ \rho \overline{\rho^{\gamma}} \\
 &=-\rho\eta^{2} -\rho\Big[(-\Delta)^{-1}\nabla \cdot(\nabla \cdot \sigma-\nabla \eta)\Big] +\frac{d}{dt}\Big[\rho (-\Delta)^{-1}\nabla \cdot(\rho \vu)\Big] \\
 &+ \nabla \cdot\Big[\rho \vu (-\Delta)^{-1}\nabla \cdot(\rho \vu)\Big] \\
 &+ \rho \Big[(-\Delta)^{-1}\partial_{i}\partial_{j}(\rho \vu_{i}\vu_{j}) -u\cdot \nabla (-\Delta)^{-1}\nabla \cdot(\rho \vu)\Big].
 \end{split}
\end{equation}
Comparing the last two relations, we have
\begin{equation}
\partial_{t}(\bar{s}-s)+\Div\left ((\bar{s}-s)\vu_{n}\right)=-\overline{\rho\Div\vu}+\rho_{n}\Div\vu_{n}
\label{e13}
\end{equation}
and
\begin{equation}
\partial_{t}(\bar{s}-s)+\Div\left ((\bar{s}-s)\vu_{n}\right)=\frac{1}{2}\left(\rho\pi-\overline{(\rho_{n})^{\gamma_{n}+1}}\right).
\label{e14}
\end{equation}
Now, using that
$$(\rho)^{\gamma_{n}}\rightarrow \mathbf{1}_{\{\rho=1\}}, \quad \text{a.e. in $L^{p}((0,T)\times \Omega)$},$$
which yields
$$(\rho)^{\gamma_{n}}(\rho_{n}-\rho)\rightharpoonup 0,$$
we obtain
\begin{equation}
\overline{(\rho_{n})^{\gamma_{n}+1}}-\rho\overline{(\rho_{n})^{\gamma_{n}}}=\overline{(\rho_{n})^{\gamma_{n}}(\rho_{n}-\rho)}=\overline{((\rho_{n})^{\gamma_{n}}-\rho^{\gamma_{n}})(\rho_{n}-\rho)}\geq 0.
\label{e15}
\end{equation}
From \eqref{e15} we obtain, 
\begin{equation}
\rho \pi=\rho\overline{(\rho_{n})^{\gamma_{n}}}\leq \overline{(\rho_{n})^{\gamma_{n}+1}}.\nonumber
\end{equation}
We integrate  \eqref{e14} in space to get
\begin{equation}
\partial_{t}\int_{\Omega}(\bar{s}-s)dx\leq 0. \nonumber
\end{equation}
Now, since $(\bar{s}-s)|_{t=0}=0$ and by the convexity of $s$ we have $s\leq \bar{s}$ and $s=\bar{s}$.
Therefore, from \eqref{e14} we obtain
\begin{equation}
\rho \pi=\overline{(\rho_{n})^{\gamma_{n}+1}}
\label{e18}
\end{equation}
Moreover we have 
\begin{equation}
(\rho_{n})^{\gamma_{n}+1}\geq (\rho_{n})^{\gamma_{n}}-\varepsilon.
\label{e19}
\end{equation}
indeed it is sufficient to use the  property $x^{\gamma_{n}+1}\geq x^{\gamma_{n}}-\varepsilon$, $\varepsilon>0$ and any $x\geq 0$ in the case $x=\rho$.
By using \eqref{e18} and by passing to the weak limit in \eqref{e19}  we end up with
$$
\rho\pi\geq \pi-\varepsilon,
$$
and, as $\varepsilon\to 0$ we conclude with
\begin{equation}
\rho\pi\geq \pi.
\label{e20}
\end{equation}
The last issue to be proved  is  $\rho\pi\leq \pi$. Since $\rho\pi$ is not defined almost everywhere in order to give a meaning to the inequality we want to prove we define by $\omega_{k}$ a smoothing sequence in the space and time variables  as follows
$$\omega_{k}=k^{4}\omega(k\cdot),$$
$$\omega\in C^{\infty}(\R^{4}),\  \omega\geq 0, \quad \int_{\R^{4}}\!\!\!\!\!\!\!\!\!-\omega dxdt=1,\quad spt\omega\in B_{1}(\R^{4}).$$
We denote by $\rho_{k}$ and $\pi_{k}$ a sequence of smooth functions defined as
$$\rho_{k}=\rho\ast\omega, \qquad \pi_{k}=\pi\ast\omega$$
and we have that
$$\rho_{k}\rightarrow \rho \quad \text{in $C([0,T];L^{p})\cap C([0,T];H^{-1})$},$$
$$\pi_{k}\rightarrow \pi\quad \text{in $W^{-1,2}\cap L^{1}(L^{q})$},$$
for any $p,q$ such that $1/p +1/q=1$.
Hence we can rewrite $(\rho-1)\pi$ as
\begin{equation}
(\rho-1)\pi=(\rho_{k}-1)\pi_{k}+(\rho-\rho_{k})\pi_{k}+(\rho-1)(\pi-\pi_{k})
\label{e21}
\end{equation}
Since $\rho_{k}\leq 1$, as   $k\to \infty$ in \eqref{e21} we obtain
\begin{equation}
\rho\pi-\pi\leq 0
\label{e22}
\end{equation}
Considering together and \eqref{e20}  and \eqref{e22} we have \eqref{e12} and  we conclude the proof of the Theorem \ref{MT2}.

\subsection{Proof of the Theorem \ref{MT}}

The proof of the  Theorem \ref{MT} is a consequence of the Theorem \ref{MT2}. 
The only thing we have to check is that the condition \eqref{1.18} holds in the sense of distribution. This last issue is a consequence of the following lemma (for the proof we refer to \cite{LionsMasmoudi-1999}, Lemma 2.1).
\begin{lemma}
Let $\vu\in L^{2}(0,T;H^{1}_{loc}(\Omega))$ and $\rho\in L^{2}_{loc}((0,T)\times \Omega)$ satisfying 
 $$\partial_t \rho_n +\Div(\rho_n \vu_n)=0, \quad \text{in $(0,T)\times \Omega$},$$
 $$\rho(0)=\rho_{0},$$
 then the following two assertion are equivalent
 \begin{itemize}
 \item[(i)] $\Div \vu=0$, a.e. on $\{ \rho \geq 1\}$ and $0\leq \rho_{0}\leq 1$.
  \item[(ii)] $0\leq \rho\leq 1$.
 \end{itemize}
 \end{lemma}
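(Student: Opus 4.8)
The plan is to derive both implications from the renormalization principle for the continuity equation. Since $\vu\in L^{2}(0,T;H^{1}_{loc}(\Om))$, so that $\Div\vu\in L^{2}_{loc}$, and $\rho$ lies in a Lebesgue class compatible with $\vu$ (in the situation where the lemma is applied here one actually has $\rho\in L^{\infty}$, inherited from the uniform bounds on the $\rho_{n}$), the DiPerna--Lions theory guarantees that $\rho$ is a renormalized solution: for every $\beta\in C^{1}(\R)$ with $\beta'$ bounded,
\[
\partial_{t}\beta(\rho)+\Div(\beta(\rho)\vu)+\bigl(\beta'(\rho)\rho-\beta(\rho)\bigr)\Div\vu=0 \qquad \text{in } \mathcal{D}'((0,T)\times\Om).
\]
Writing $H_{\beta}(s):=\beta'(s)s-\beta(s)$, so that $H_{\beta}'(s)=\beta''(s)\,s$, I would use two elementary facts: if $\beta$ is convex, $C^{1}$, with $\beta\equiv0$ on $(-\infty,1]$ and $\beta>0$ on $(1,\infty)$, then $H_{\beta}\equiv0$ on $(-\infty,1]$ and $H_{\beta}>0$ on $(1,\infty)$; and for $\beta(s)=(\min\{s,0\})^{2}$ one has $H_{\beta}=\beta$.

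For (ii)$\Rightarrow$(i) --- the direction actually used in the proof of Theorem~\ref{MT} --- I would take $\beta_{k}(s)=s^{k}$ on $[0,1]$, extended to a $C^{1}$ function with bounded derivative outside $[0,1]$ (harmless, since $0\le\rho\le1$), for which $H_{\beta_{k}}(s)=(k-1)s^{k}$. Dividing the renormalized identity by $k-1$ and letting $k\to\infty$, the transport terms $\tfrac{1}{k-1}\partial_{t}\rho^{k}$ and $\tfrac{1}{k-1}\Div(\rho^{k}\vu)$ tend to $0$ in $\mathcal{D}'$, being $O(1/k)$ in $L^{\infty}$ and $L^{2}_{loc}$ respectively, while $\rho^{k}\Div\vu\to\mathbf{1}_{\{\rho=1\}}\Div\vu$ in $L^{1}_{loc}$ by dominated convergence ($\rho^{k}\to\mathbf{1}_{\{\rho=1\}}$ pointwise, dominated by $|\Div\vu|$). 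This yields $\mathbf{1}_{\{\rho=1\}}\Div\vu=0$ a.e., i.e. $\Div\vu=0$ a.e. on $\{\rho\ge1\}$ (which coincides with $\{\rho=1\}$ here), and $0\le\rho_{0}\le1$ follows from (ii) together with the time continuity of $\rho$.

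For (i)$\Rightarrow$(ii) I would argue the two bounds separately. For $\rho\le1$: choose a convex $\beta\in C^{1}$ with $0\le\beta'\le1$, $\beta\equiv0$ on $(-\infty,1]$ and $\beta>0$ on $(1,\infty)$; then $H_{\beta}(\rho)\Div\vu$ is supported in $\{\rho>1\}\subseteq\{\rho\ge1\}$, where $\Div\vu=0$ by (i), so it vanishes a.e. and the renormalized identity collapses to the pure transport equation $\partial_{t}\beta(\rho)+\Div(\beta(\rho)\vu)=0$. Integrating over $\Om$, using $\vu=0$ on $\partial\Om$ (or a spatial cut-off), gives $\frac{d}{dt}\int_{\Om}\beta(\rho)\,\dx=0$, hence $\int_{\Om}\beta(\rho(t))\,\dx=\int_{\Om}\beta(\rho_{0})\,\dx=0$ since $\rho_{0}\le1$; as $\beta>0$ off $(-\infty,1]$, this forces $\rho\le1$ a.e. For $\rho\ge0$: with $\beta(s)=(\min\{s,0\})^{2}$ the renormalized identity and $\vu=0$ on $\partial\Om$ give $\frac{d}{dt}\int_{\Om}\beta(\rho)\,\dx=-\int_{\Om}\beta(\rho)\Div\vu\,\dx$, and since $\beta(\rho_{0})=0$ a Gronwall argument forces $\beta(\rho)\equiv0$, i.e. $\rho\ge0$ (in the application this is immediate, as $\rho=\lim_{n}\rho_{n}$ with $\rho_{n}\ge0$).

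The main difficulty is not the algebra but the low regularity: making precise that $\rho$ is genuinely a renormalized solution at the stated level of integrability is exactly the DiPerna--Lions commutator lemma, and the spatial integrations need justification when $\vu$ is only in $H^{1}_{loc}$. In particular, closing the Gronwall estimate for $\rho\ge0$ requires a time-integrability of $\Div\vu$ slightly stronger than $L^{2}_{t,x}$; this is precisely the point that is bypassed in the use made of the lemma here, where $0\le\rho\le1$ is already known from Step~1 and only the implication (ii)$\Rightarrow$(i) is invoked.
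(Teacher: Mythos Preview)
The paper does not supply its own proof of this lemma; it simply cites \cite{LionsMasmoudi-1999}, Lemma~2.1, and uses only the implication (ii)$\Rightarrow$(i) to verify \eqref{1.18}. Your proposal is therefore strictly more than what the paper provides, and it is essentially the Lions--Masmoudi argument: both directions are obtained from the renormalized continuity equation by choosing suitable nonlinearities $\beta$, exactly as you do (powers $\beta_k(s)=s^k$ for (ii)$\Rightarrow$(i), a convex $\beta$ vanishing on $(-\infty,1]$ for the upper bound in (i)$\Rightarrow$(ii)). Your identification of the subtle points---that the DiPerna--Lions commutator lemma is needed to justify renormalization at this regularity, and that the Gronwall step for $\rho\ge0$ is not quite closed with only $\Div\vu\in L^2_{t,x}$---is accurate and matches the caveats in the original reference; as you note, in the present paper the nonnegativity is inherited from $\rho_n\ge0$ anyway, so this gap is immaterial for the application.
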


The final step is to obtain  the energy inequality \eqref{energy} that we require our global weak solutions have to satisfy.
By applying  the convergence results proved in the Theorem \ref{MT2} we can pass into the weak limit in the energy inequality \eqref{energyapprox} and we get
\[
 \int_{\Omega} \left[\frac{1}{2} \rho |\vu|^2 +\xi \eta^{2} + k(l-(K+1)) {\mathcal   F}(\eta) + k \int_D M {\mathcal F}\left(\frac{\psi}{M}\right) d \vc{q}\right] dx 
\]
\[
+\mu^S\int_{0}^{t}\!\! \int_{\Omega} |D(\vu) - \frac{1}{d} (\Div_x \vu) \mathbb{I}|^2 dx + \mu^B \int_{0}^{t}\!\!\int_{\Omega} |\Div_x \vu|^2 dx 
\]
\[
+ 2 \e \xi \int_{0}^{t}\!\!\int_{\Omega} |\Grad_x \eta|^2 dx + 4 \e k (L - (K+1))\int_{0}^{t}\!\! \int_{\Omega} |\nabla_x \sqrt{\eta}|^2 dx
\]
\[
+ 4k \e\int_{0}^{t}\!\! \int_{\Omega \times D} M |\Grad_x \sqrt{\frac{\psi}{M}}|^2 d \vc{q} dx 
\]
\[
+ \frac{k}{\lambda} \sum_{i=1}^K \sum_{j=1}^K\int_{0}^{t}\!\! \int_{\Omega \times D} M \nabla_{\vc{q}_j} \sqrt{\frac{\psi}{M}} \nabla_{\vc{q}_i}  \sqrt{\frac{\psi}{M}} d \vc{q} dx\]
\[\leq  
\frac{1}{2}  \int_{\Omega}\rho_0|\vc{u_0}|^2 dx + k \int_{\Omega \times D} M \mathcal{F}(\widehat{\psi}_0) d \vc{q} dx+\liminf_{n\to \infty}\int_{\Omega}dx\frac{(\rho_{n0})^{\gamma_{n}}}{\gamma_{n}}+\int_{\Omega} \rho f \cdot \vc{u} dx,
\]
a.e in $t$. Now, if we take, for any $n>2$, $\rho_{n0}=\rho_{0}$, $m_{n0}=m_{0}$ and since $0\leq\rho_{0}\leq 1$, then
$$\liminf_{n\to \infty}\int_{\Omega}\frac{(\rho_{n0})^{\gamma_{n}}}{\gamma_{n}}dx=0$$
and we end up with the energy inequality \eqref{energy}.

\section{Related models} \label{S5}
We conclude this paper by mentioning two related models for the problem $\mathbf{(P_F)}$, where the conditions \eqref{1.19}-\eqref{1.20} can be generalized.
\subsection{General pressure law fluid}
The free-boundary conditions \eqref{1.19}-\eqref{1.20} can be extended to include the case with a general fluid pressure, namely 
\begin{equation}
\label{5.1}
p_{F} \ge  p(1) \quad \mbox{a.e} \,\, \mbox{in} \,\, \{ \rho = 1\}
\end{equation}
\begin{equation}
\label{5.2}
p_{F} =  p(\rho)  \quad \mbox{a.e} \,\, \mbox{in} \,\, \{ \rho < 1\}
\end{equation}
The polymer behaviour in this case is that of a barotropic fluid in the region $ \{ \rho < 1\}$ and the condition  \eqref{cp} becomes
$$ \rho (p_F - p(\rho)) = p_F - p(\rho).$$
This generalization requires only some technical changes in the energy estimates which can be treated in a similar manner.

\subsection{Congestion constraints}
Our \ analysis  \ can \  accommodate non-homogeneous congestions constraints, i.e. a non homogeneous threshold for the pressure. In this case \eqref{1.19}-\eqref{1.20} have the form
 \begin{equation}
 \label{5.3}
p_F  \ge  0  \quad \mbox{a.e} \,\, \mbox{in} \,\, \{ \rho = \rho^{*}(x)\}
\end{equation}
\begin{equation}
\label{5.4}
p_{F} =  0   \quad \mbox{a.e} \,\, \mbox{in} \,\, \{ \rho < \rho^{*}(x)\}.
\end{equation}
This can be achieved by introducing in the approximating system \eqref{3.1}-\eqref{3.4} an approximating pressure of the form
$${p_F}_n = \left(\frac{\rho_n}{\rho^{*}}\right)^{\gamma_n}.$$

\section{Acknowlegments}
 The research of D.D. leading to these results was supported  by the European Union's Horizon 2020 Research and Innovation Programme under the Marie Sklodowska-Curie Grant Agreement No 642768 (Project Name: ModCompShock). K.T.  gratefully acknowledges the support  in part by the National Science Foundation under the grant DMS-1614964  and by the Simons Foundation under the Simons Fellows in Mathematics Award 267399. Part of this research was performed during the visit of K.T. at University of L'Aquila which was supported by the European Union's Horizon 2020 Research and Innovation Programme under the Marie Sklodowska-Curie Grant Agreement No 642768 (Project Name: ModCompShock).

\end{document}